\documentclass[11pt,a4paper]{article}
\usepackage[T1]{fontenc}
\usepackage{lmodern}
\usepackage[margin=27mm,top=25mm,bottom=26mm,headheight=14pt]{geometry}
\usepackage{amsmath,amssymb,amsthm,mathtools}
\usepackage{microtype}
\usepackage{enumitem}
\usepackage{graphicx}
\usepackage{booktabs}
\usepackage{fancyhdr}
\usepackage[hidelinks]{hyperref}
\hypersetup{
  pdftitle={Homological lifts of Arnold strangeness},
  pdfauthor={Noboru Ito},
  pdfsubject={Homological lifts of Arnold strangeness}
}
\setlist{itemsep=2pt,topsep=4pt}
\numberwithin{equation}{section}
\newtheorem{theorem}{Theorem}[section]
\newtheorem{proposition}[theorem]{Proposition}
\newtheorem{lemma}[theorem]{Lemma}
\newtheorem{corollary}[theorem]{Corollary}
\newtheorem{maintheorem}{Theorem}

\theoremstyle{definition}
\newtheorem{definition}[theorem]{Definition}

\newtheorem{remark}[theorem]{Remark}
\newtheorem{notation}[theorem]{Notation}
\newtheorem*{unnumberedremark}{Remark}
\DeclareMathOperator{\ind}{ind}
\DeclareMathOperator{\rot}{rot}
\DeclareMathOperator{\rank}{rank}
\DeclareMathOperator{\sign}{sign}
\DeclareMathOperator{\ch}{ch}
\newcommand{\Z}{\mathbb Z}
\newcommand{\St}{\operatorname{St}}
\newcommand{\Stq}{\St_q}
\newcommand{\Stqv}{\St_q^v}
\newcommand{\CE}{\mathcal E}
\newcommand{\CC}{\widetilde C}
\title{Homological lifts of Arnold strangeness}
\author{Noboru Ito}
\date{October 25, 2026}

\begin{document}
\maketitle
\begingroup
\renewcommand{\thefootnote}{}
\footnotetext{\textit{Key words and phrases.}
Arnold invariants, categorification, plane curve, Alexander index, edge state sum.\par
\textit{2020 Mathematics Subject Classification.}
Primary 57R42; Secondary 18N25, 53A04.}
\endgroup

\begin{abstract}
We construct homological lifts of Arnold's strangeness \(\St\) for long and
closed plane curves. For long curves, we introduce a new \(q\)-variable
quantization polynomial \(\Stq\), an edge state sum, designed for
categorification and construct a
triply graded homology that recovers \(\Stq\). We prove strictness by exhibiting
long curves with the same number of crossings and the same \(\Stq\)
(in particular, the same rotation number and \(\St\)) that are distinguished
by the homology. For
closed curves, we introduce a corresponding new polynomial \(\Stq\) and
construct an edge homology. In this case \(\Stq\) is recovered from a signed
\(q\)-character \(\ch_q\) defined on the canonical cycles of the homology rather
than directly from a graded rank. Again, we exhibit closed curves with the same number of crossings
and the same \(\Stq\) (in particular, the same rotation number and \(\St\))
that are distinguished by the homology.
\end{abstract}

\section*{Introduction}
Arnold's plane-curve invariants are \(J^-\), \(J^+\), and \(\St\)
\cite{Arnold1994}. Their \(q\)-variable quantizations were given, respectively,
by Viro's \(P_C(q)\) \cite{Viro1996}, Lanzat--Polyak's \(I_q(C)\)
\cite{LanzatPolyak2013}, and \cite{Ito2023}.\footnote{In \cite{Ito2023}, the quantization polynomial is
denoted by \(\Stq\) and is defined for long curves, equivalently for closed
curves based on an exterior edge; it corresponds to the vertex state sum.}
In \cite{Ito2026}, homologies corresponding to categorifications of \(J^-\)
and \(J^+\) were constructed whose graded ranks recover the Viro and
Lanzat--Polyak polynomials, and their strictness was established. Through these
categorifications, a homological face-state-sum model realizing Viro's analogy
with face-state-sum formulas for quantum knot polynomials was obtained
\cite[Section~5.3]{Viro1996}. A categorification of \(\St\), however, remained
open.

With categorification in mind, we reconsider the \(q\)-quantization of \(\St\).
For long curves we introduce a new polynomial, the edge state sum \(\Stq\),
different from the vertex state sum of \cite{Ito2023}, and construct a
homological lift of \(\St\) through this polynomial. The polynomial is invariant under self-tangency
perestroikas, and after putting \(q=e^h\), its low-order coefficients recover
the rotation number and \(\St\).

The previous vertex state sum \(\Stqv\) of \cite{Ito2023} was not defined
for closed curves without a prescribed base point. Motivated by
Shumakovitch's formula for classical strangeness, with its base-point
correction \cite{Sh1996}, we introduce a new edge-based polynomial, the edge state sum \(\Stq\),
for unbased closed curves and prove its invariance under self-tangency
perestroikas as well as its base-point independence. After setting \(q=e^h\), the factor \(1/2\) in the quadratic coefficient
of the Maclaurin expansion at \(h=0\) arises directly from the exponential
series. Decategorification of the homology constructed here yields \(\Stq\),
which fully recovers Shumakovitch's formula for \(\St\).
Furthermore, unlike the weighted circle homology used for long curves and their
corresponding exterior-edge based closed curves, we define a variant of the
edge complex used in \cite{Ito2026} for the categorification of curvature
contributions, and construct a homological lift from its canonical cycles. In this sense, to the best of our knowledge, these constructions
provide the first homological categorification of Arnold's strangeness for
generic plane curves.

The paper is organized as follows. Section~\ref{sec:b1} introduces the new
polynomial for long curves, and Sections~\ref{sec:b2}--\ref{sec:b6} construct
its homological lift. The decategorification is stated in
Theorem~\ref{thm:based-lift}, and Section~\ref{sec:b8} proves strictness.
Section~\ref{sec:u1} introduces the closed-curve polynomial, and
Sections~\ref{sec:u2}--\ref{sec:u7} construct its homological lift. The recovery
of \(\Stq\) is stated in Theorem~\ref{thm:unbased-lift}, and
Section~\ref{sec:u8} proves strictness. The principal results, including Theorems~\ref{thm:based-lift} and~\ref{thm:unbased-lift}, are summarized in the following main theorems.

\begin{maintheorem}\label{thm:main-A}
Let \(C\) be an oriented long generic plane curve.
Let \(V(C)\) be its vertex set. For each \(v\in V(C)\), denote the
incoming lower-left and lower-right edges terminating at \(v\) by
\(e_l^v,e_r^v\), respectively. Let
\[
  E(C)=\coprod_{v\in V(C)}\{e_l^v,e_r^v\}
\]
be the set of oriented incoming edges terminating at crossings; the departing
edge at infinity is not included. For \(e\in E(C)\), let \(\ind(e)\) be the
average of the mapping degrees of the two adjacent regions. If \(e_l^v\) is
encountered first, set
\(s(e_l^v)=-1\), \(s(e_r^v)=+1\); if \(e_r^v\) is encountered first, reverse
the signs. Then
\[
  \Stq(C)=\sum_{e\in E(C)}s(e)q^{\ind(e)}
\]
is invariant under self-tangency perestroikas. With \(q=e^h\),
\[
  \St_{e^h}(C)=\rot(C)h+\St(C)h^2+O(h^3),
\]
where \(\rot(C)\) is the rotation number of the long curve and \(\St(C)\) is
the classical strangeness of the corresponding closed curve based on an
exterior edge.

Let \(C^r\) be the unique 2-reduced representative of \(C\) in the sense of
Definition~\ref{def:2reduced} and Proposition~\ref{prop:reduction}, and write
\[
  H^{i,j,k}(C):=H^{i,j,k}(C^r).
\]
There exists a triply graded homology \(H^{i,j,k}(C)\) with graded-rank
polynomial
\[
  \mathcal P(C;t,q,u)
  =\sum_{i,j,k}t^iq^ju^k\,\rank H^{i,j,k}(C)
\]
satisfying
\[
  \left.\frac{\partial}{\partial u}\mathcal P(C;-1,q,u)\right|_{u=1}
  =-\Stq(C).
\]
Moreover, there exist two sequences of long curves
\(\{C_m\}_{m\ge0}\) and \(\{C'_m\}_{m\ge0}\) such that
\[
  \#V(C_m)=\#V(C'_m)=4(m+1),\qquad
  \Stq(C_m)=\Stq(C'_m)=0,
\]
and, writing \(\Stqv\) for the vertex quantization introduced in
\cite{Ito2023},
\[
  \Stqv(C_m)=\Stqv(C'_m)=0.
\]
In particular, the rotation numbers of these long curves and their
classical strangeness values satisfy
\[
  \rot(C_m)=\rot(C'_m)=0,\qquad \St(C_m)=\St(C'_m)=0.
\]
Nevertheless,
\[
  H^{*,*,*}(C_m)\not\cong H^{*,*,*}(C'_m)
\]
as triply graded groups, and
\[
  H^{*,*,*}(C_m)\not\cong H^{*,*,*}(C_n)\qquad(m\ne n).
\]
Thus, for each \(m\), the homology distinguishes \(C_m\) from \(C'_m\),
and it also distinguishes \(C_m\) from \(C_n\) whenever \(m\ne n\),
although these curves have the same \(\Stqv\) and the same \(\Stq\)
(in particular, the same rotation number and \(\St\)).
\end{maintheorem}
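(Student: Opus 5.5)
This is a summary theorem, so I will prove it by assembling the three ingredients developed in the body of the paper: the invariance and expansion of \(\Stq\) (Section~\ref{sec:b1}), the decategorification Theorem~\ref{thm:based-lift} (Sections~\ref{sec:b2}--\ref{sec:b6}), and the strictness examples of Section~\ref{sec:b8}.

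\textbf{Invariance.} I first check that \(\Stq(C)\) is unchanged under self-tangency perestroikas. A direct or inverse self-tangency move creates or destroys two crossings \(v_1,v_2\) and four incoming edges. The edges away from the bigon keep their indices, and their signs are unchanged because the order of passage through each surviving crossing is unchanged. The indices of the four new edges are determined by the index \(n\) of the region outside the tangency, and the bigon index is \(n\pm1\). One enumerates the cases: same or opposite orientation of the two branches, and which branch is traversed first. In each case the four terms pair off as \(q^{a}-q^{a}\), so their total contribution vanishes.

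Triple points and the edges subdivided by the new crossings need care. An edge split by a new crossing is no longer a single incoming edge at its terminal crossing. However, the split pieces adjacent to the old terminal crossing keep the same index, because the neighbouring regions near the terminal vertex are unchanged. This locality is the first point I expect to require careful bookkeeping.

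\textbf{The expansion.} I write \(q^{\ind(e)}=1+\ind(e)h+\tfrac12\ind(e)^2h^2+O(h^3)\).
\begin{itemize}
\item The constant term vanishes because the signs cancel at each vertex.
\item The linear term is \(\sum_v\bigl(\ind(e^v_{\text{second}})-\ind(e^v_{\text{first}})\bigr)\). At a crossing the lower-left and lower-right incoming edges differ in index by exactly one, with sign given by the crossing sign relative to the first passage. So this sum is the Whitney-type formula \(\sum_v \epsilon_v\) for the rotation number of a long curve.
\item The quadratic term is \(\tfrac12\sum_v\epsilon_v(\ind_1^2-\ind_2^2)=\sum_v\epsilon_v\ind(v)\), where \(\ind(v)\) is the average of the two edge indices, i.e. the vertex index. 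This matches Shumakovitch's formula for \(\St\) of the closed curve based on an exterior edge, where the exterior base point makes the correction term vanish.
\end{itemize}
I verify the sign convention once on the curve with one kink.

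\textbf{Homology and decategorification.} Here I invoke Definition~\ref{def:2reduced} and Proposition~\ref{prop:reduction}, so that \(H(C):=H(C^r)\) is well defined by uniqueness of the 2-reduced representative. The identity \(\partial_u\mathcal P(C;-1,q,u)|_{u=1}=-\Stq(C)\) is Theorem~\ref{thm:based-lift} applied to \(C^r\), combined with \(\Stq(C)=\Stq(C^r)\) from invariance.

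\textbf{Strictness.} I take the families \(C_m,C'_m\) from Section~\ref{sec:b8}. For these I verify:
\begin{itemize}
\item \(\#V=4(m+1)\) by counting.
\item \(\Stq=0\) by listing the \(8(m+1)\) signed edge indices and observing pairwise cancellation.
\item \(\Stqv=0\) similarly, from the vertex formula of \cite{Ito2023}.
\item \(\rot=\St=0\), which then follows from the expansion.
\end{itemize}
To separate the homologies, I compute a graded invariant from \(\mathcal P\) itself, for example its total rank or its top \(u\)-degree, which is not determined by the \(u\)-derivative at \(t=-1\). The expected result is that this invariant grows with \(m\) for \(C_m\) and differs between \(C_m\) and \(C'_m\). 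The main obstacle is computing these ranks explicitly for the whole family. I would first try to decompose the complex of \(C_m\) as a tensor or direct sum of \(m+1\) copies of a basic 4-crossing block. The homology rank is then additive or multiplicative in \(m\), and a one-block comparison between \(C_0\) and \(C'_0\) propagates to all \(m\).
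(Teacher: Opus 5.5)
\textbf{Verdict: there is a genuine gap, in the strictness part.} The invariance, the $h$-expansion and the decategorification identity are correct. They follow the paper's route: Whitney's formula with $\ind(e^b)=\tfrac12$, Shumakovitch's formula with vanishing correction, and Theorem~\ref{thm:based-lift} applied to $C^r$. For invariance you do not need a case enumeration. Every crossing contributes $s(e_l^v)(q^{i+1/2}-q^{i-1/2})$ with $i=\ind(v)$, and the two bigon crossings have the same $i$ and opposite signs. Your remark that the split edges at the old terminal crossings keep their index and sign is correct, and the paper leaves it implicit.

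\textbf{Strictness is not established, and the proposed block decomposition fails.} You only state the expected outcome. You propose to write the complex of $C_m$ as a direct sum or tensor product of $m+1$ one-block complexes, but that is not how this homology behaves. The weighted circle homology is free on the oriented Seifert circles of the based closed realization of $C^r$. All generators sit in degree $i=1$ with $d=0$, in bidegree $(j_n,k_n)$, where $k_n=\sum_{e\in E_n}s(e)$. In a connected sum the outer circles of the factors fuse into a single circle. So for $C_m=\Gamma_m$ each factor gives two circles with $(j,k)=(\tfrac32,\pm1)$ and two non-outer circles with $(\tfrac12,1)$. There is one additional outer circle in tridegree $(1,\tfrac12,-2(m+1))$. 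The total rank is $4m+5$, which is neither the additive value $5(m+1)$ nor a multiplicative one.

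\textbf{The second family needs 2-reduction, which your plan omits.} $H$ is computed on the 2-reduced representative. $C'_m=K_m$ reduces by crossing-decreasing self-tangency perestroikas to the crossing-free curve. Hence $H(K_m)\cong\Z$ in tridegree $(1,\tfrac12,0)$ for every $m$, and any block structure of the unreduced $K_m$ is irrelevant. With these two computations, total rank ($4m+5$ versus $1$) gives both non-isomorphisms at once. Your other suggested invariant, the top $u$-degree, equals $1$ for every $\Gamma_m$. It therefore cannot separate $\Gamma_m$ from $\Gamma_n$; the bottom $u$-degree $-2(m+1)$ or the total rank does.
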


\begin{maintheorem}\label{thm:main-B}
Let \(C\) be an oriented generic closed plane curve with no prescribed base
point. Choose a temporary base point \(b\) on an edge \(e^b\), called the
base edge. Let \(E(C)\) be the edges terminating at crossings.
Retain the original base edge (denoted \(e_1^v\) if it terminates at
\(v\)). Introduce \(e_0^b\), for the side entering \(b\), and \(e_1^b\),
for the side leaving \(b\), as in the base-edge convention of
Notation~\ref{not:base-edge}; put
\(E_b(C)=E(C)\sqcup\{e_0^b,e_1^b\}\).
Here \(\ind(e)\) denotes the average of the mapping degrees of the
two regions adjacent to the edge \(e\). Put
\[
 s(e_0^b)=s(e_1^b)=1,\qquad
 \ind(e_0^b)=\ind(e_1^b)=\ind(e^b).
\]
When there are no crossings, \(E(C)=\varnothing\). The signs \(s(e_l^v)\) and \(s(e_r^v)\) of the incoming edges at a
crossing \(v\) are the local intersection numbers: when
\(e_l^v\) is encountered first from \(b\), assign
\(s(e_l^v)=-1\), \(s(e_r^v)=+1\); if \(e_r^v\) is encountered first,
reverse the signs. Then
\[
  \Stq(C)=\sum_{e\in E_b(C)}s(e)q^{\ind(e)}
\]
is independent of \(b\) and invariant under self-tangency perestroikas. With
\(q=e^h\),
\[
  \St_{e^h}(C)
  =2+\rot(C)h+\left(\St(C)+\frac14\right)h^2+O(h^3).
\]

Let \(C^r\) be the 2-reduced representative of the unbased closed
curve \(C\), obtained by repeated crossing-decreasing self-tangency perestroikas and
unique up to orientation-preserving planar isotopy, as stated in
Proposition~\ref{prop:u-unique-reduction}. Let \(V(C^r)\) be the vertex set
of \(C^r\). Write \(\CE_b(C):=\CE_b(C^r)\). There exists an edge complex
\[
  \CE_b(C):\qquad
  0\longrightarrow\mathcal E_b\xrightarrow{\partial}\mathcal V_b
  \longrightarrow0,
\]
where \(\mathcal E_b=\Z\langle E_b(C^r)\rangle\),
\(\mathcal V_b=\Z\langle V(C^r)\sqcup\{b\}\rangle\), and
\[
  \partial[e_l^v]=\partial[e_r^v]=[v],\qquad
  \partial[e_0^b]=\partial[e_1^b]=[b].
\]
Its first homology is independent of the temporary base point up to
isomorphism. On the canonical cycles
\[
 c_v=[e_l^v]-[e_r^v],\qquad c_b=[e_0^b]-[e_1^b],
\]
define a signed \(q\)-character by
\[
 \ch_q(c_v)=s(e_l^v)\bigl(q^{i+1/2}-q^{i-1/2}\bigr)
 \quad(i=\ind(v)),\qquad
 \ch_q(c_b)=2q^{\ind(e^b)}.
\]
Here \(\ind(v)\) is the average of the Alexander numberings of the
four locally adjacent regions at the crossing \(v\).
Then
\[
  \sum_{v\in V(C^r)}\ch_q(c_v)+\ch_q(c_b)=\Stq(C).
\]
Equivalently, the character polynomial \(\mathcal P_{\CE_b}(C;t,q)\)
satisfies
\[
  \mathcal P_{\CE_b}(C;t,q)=t\Stq(C),\qquad
  \mathcal P_{\CE_b}(C;-1,q)=-\Stq(C).
\]
Moreover, there exist two sequences of closed curves
\(\{C_m\}_{m\ge0}\) and \(\{C'_m\}_{m\ge0}\) such that
\[
  \#V(C_m)=\#V(C'_m)=4(m+1),\qquad
  \Stq(C_m)=\Stq(C'_m)=2q^{1/2},
\]
For these closed curves, the rotation numbers and classical
strangeness values satisfy
\[
  \rot(C_m)=\rot(C'_m)=1,\qquad \St(C_m)=\St(C'_m)=0.
\]
Nevertheless,
\[
  H_1(\CE_b(C_m))\not\cong H_1(\CE_b(C'_m)),
\]
and
\[
  H_1(\CE_b(C_m))\not\cong H_1(\CE_b(C_n))\qquad(m\ne n).
\]
Thus, for each \(m\), the edge homology distinguishes \(C_m\) from \(C'_m\),
and it also distinguishes \(C_m\) from \(C_n\) whenever \(m\ne n\),
although these curves have the same \(\Stq\)
(in particular, the same rotation number and \(\St\)).
\end{maintheorem}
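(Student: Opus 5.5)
Theorem~\ref{thm:main-B} is a summary, so the plan is to assemble it from the closed-curve results of Sections~\ref{sec:u1}--\ref{sec:u8}, proving each clause in order.

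\emph{Step 1 (invariance).} For base-point independence I will move \(b\) across one crossing \(v\). Only the signs \(s(e_l^v),s(e_r^v)\) can change, namely when the order of first encounter from \(b\) flips. The base pair \(e_0^b,e_1^b\) then moves to an adjacent edge, whose index differs by \(\pm\tfrac12\). I will check in each local case that the flipped pair \(\pm(q^{\ind(e_r^v)}-q^{\ind(e_l^v)})\) exactly compensates the change of the term \(2q^{\ind(e^b)}\). This is a Shumakovitch-type base-point correction, and I expect it to be the main bookkeeping obstacle: one must verify that the two incoming edges at \(v\) have indices \(\ind(v)\pm\tfrac12\), in the right order relative to the orientation. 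For a self-tangency perestroika, I will use base-point independence to put \(b\) far from the move. The two new crossings \(v,w\) have equal indices \(\ind(v)=\ind(w)\). Because the branches are parallel or antiparallel, their first-encounter orders are opposite, so their four edge terms cancel pairwise. The edges outside the move keep their indices and signs.

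\emph{Step 2 (expansion).} Put \(q=e^h\) and expand. The constant term is \(\sum s(e)=2\), since each crossing contributes \(0\). Pairing the edges at \(v\) gives terms \(\pm(e^{(i+1/2)h}-e^{(i-1/2)h})=\pm(h+i h^2+\dots)\). The linear coefficient is then \(\sum_v \pm1+2\ind(e^b)\), which I will identify with \(\rot(C)\) via Whitney's formula. The quadratic coefficient is \(\sum_v\pm\ind(v)+\ind(e^b)^2\); the \(\tfrac12\) comes from the exponential series. Comparing with Shumakovitch's formula \(\St=\sum_v\varepsilon_v\ind(v)+\ind(e^b)^2-\tfrac14\) gives \(\St+\tfrac14\).

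\emph{Step 3 (complex and character).} Using Proposition~\ref{prop:u-unique-reduction}, I will define \(\CE_b(C)\) on \(C^r\). Each \(\partial\) maps two generators onto one vertex, so \(\partial\) is surjective and \(H_1\) is free with basis \(\{c_v\}\cup\{c_b\}\). Moving \(b\) changes this basis only by relabelling, which gives base-point independence. The character identity holds term by term: \(\ch_q(c_v)=s(e_l^v)q^{\ind(e_l^v)}+s(e_r^v)q^{\ind(e_r^v)}\), using \(s(e_r^v)=-s(e_l^v)\) and \(\ind(e_{l/r}^v)=i\pm\tfrac12\) from Step~1. Summing and applying Step~1 passes from \(C^r\) to \(C\). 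Placing \(H_1\) in homological degree \(1\) gives \(t\Stq\).

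\emph{Step 4 (strictness).} Here I will invoke the explicit families of Section~\ref{sec:u8}. I will compute \(\Stq=2q^{1/2}\) directly, and verify that the curves are already 2-reduced, i.e.\ admit no crossing-decreasing self-tangency. I will then distinguish the \(H_1\) by the graded data carried by the canonical cycles, namely the multiset of index gradings \((\ind(v),s(e_l^v))\). The distinction between \(C_m\) and \(C_n\) for \(m\ne n\) should follow from differing ranks or index ranges as \(m\) grows.
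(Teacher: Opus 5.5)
Your Steps 1--3 follow the paper's own route: moving \(b\) through a single crossing, moving \(b\) away from the 2-gon, Whitney's and Shumakovitch's formulas, and surjectivity of \(\partial\) together with the canonical-cycle basis. There is one slip in Step 1. When \(b\) passes through \(v\), the new base edge has index \(\ind(v)\mp\tfrac12\) where the old one had \(\ind(v)\pm\tfrac12\). So \(\ind(e^b)\) changes by \(\pm1\), not \(\pm\tfrac12\). Only with this change of \(\pm1\) does the change \(\pm2(q^{i-1/2}-q^{i+1/2})\) of \(2q^{\ind(e^b)}\) cancel the change \(\pm2(q^{i+1/2}-q^{i-1/2})\) produced by flipping both signs at \(v\).

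The genuine gap is Step~4. By your own Step~3, \(H_1(\CE_b(C))\cong\Z^{|V(C^r)|+1}\) as an abelian group, so this invariant sees only the reduced crossing number. Suppose both families were already 2-reduced with \(4(m+1)\) crossings, as you plan to verify. Then both groups would be \(\Z^{4m+5}\), and the claimed non-isomorphism \(H_1(\CE_b(C_m))\not\cong H_1(\CE_b(C'_m))\) would be false.

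Your fallback compares the multiset of pairs \((\ind(v),s(e_l^v))\). This replaces the invariant in the statement, which is the underlying ungraded group and is what the paper compares, by extra data. That data is not even base-point independent: \(s(e_l^v)\) flips when \(b\) passes through \(v\), exactly as in your Step~1. Moreover, the isomorphism of Lemma~\ref{lem:u-basepoint-H} is only a relabelling of an ungraded basis.

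The missing idea is that the two families must have the same unreduced crossing count but different reduced ones. In the paper, \(C_m=\Gamma_m\) is the connected sum of \(m+1\) copies of the left curve of Figure~\ref{fig:four-crossing-pair}. It is 2-reduced with \(4(m+1)\) crossings, so \(H_1\cong\Z^{4m+5}\). By contrast, \(C'_m=K_m\), built from the right curve, is \emph{not} 2-reduced. Crossing-decreasing self-tangency perestroikas collapse it to a crossing-free circle of rotation number \(1\). Hence \(H_1\cong\Z\), and directly \(\Stq(K_m)=\Stq(K_m^r)=2q^{1/2}\).

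Both non-isomorphisms are then rank comparisons: \(4m+5\ne1\) and \(4m+5\ne4n+5\). The values \(\rot=1\) and \(\St=0\) follow from \(2e^{h/2}=2+h+\tfrac14h^2+O(h^3)\) via your Step~2.
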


Khovanov homology \cite{Khovanov2000} is a categorification of the Jones
polynomial, whose Maclaurin expansion after setting \(q=e^h\) has Vassiliev
invariants as its coefficients. Viewing the categorification of Arnold's
order-one invariants in light of this structure suggests quantization
polynomials whose low-order Maclaurin coefficients recover Arnold invariants.
Polynomials encoding \(\St\) in their low-order Taylor coefficients have been
constructed, for example, in \cite{Ito2023,CTY2026}.
In this paper, we define quantization polynomials as edge state sums suited to
categorification for based and unbased curves, respectively, and \emph{categorify}
them. As shown in Theorems~\ref{thm:main-A} and~\ref{thm:main-B}, the homologies
corresponding to the categorifications in the based and unbased settings,
respectively, have greater distinguishing power than the quantization
polynomials obtained by decategorification.

Theorem~\ref{thm:main-A} follows from Theorem~\ref{thm:based-lift} and the
calculations in Section~\ref{sec:b8}. Theorem~\ref{thm:main-B} follows from
Theorem~\ref{thm:unbased-lift} and the calculations in
Section~\ref{sec:u8}.

All modules are over \(\Z\). In both settings of main theorems (A) and (B),
\(\Stq(C)\in\Z[q^{1/2},q^{-1/2}]\); the expansions after \(q=e^h\)
are Maclaurin expansions at \(h=0\). In this paper, we write \(\Stq\) for the two new edge state sums for based and unbased curves, respectively, and \(\Stqv\) for the vertex quantization denoted \(\Stq\)
in \cite{Ito2023}; the latter is used only in the based comparison.

\section{Long curves and exterior-edge based closed curves}\label{sec:based}

A based curve in this section is an oriented generic closed plane curve
whose base point lies on an \emph{exterior edge}, namely an edge incident
to the unbounded region. It is identified with the corresponding long curve;
arbitrary based closed curves are not considered in this section. After an
orientation-preserving planar isotopy, assume that the two ends of the long
curve agree with the \(x\)-axis outside a compact set. We write
\(C_{\mathrm{long}}\) for this long curve and \(C_{\mathrm{closed}}\) for the
closed curve obtained by joining the negative and positive ends \emph{above} the
\(x\)-axis. With this convention,
\begin{equation}\label{eq:closed-long-rot}
 \rot(C_{\mathrm{closed}})=\rot(C_{\mathrm{long}})+1.
\end{equation}
We use this convention to fix the one-to-one correspondence between the
oriented long and exterior-edge based closed settings.
Throughout this section, \(\rot(C)\) denotes the rotation number of the long
curve.

For a long curve \(C\) with \(n\ge1\) crossings, number the oriented
edges in traversal order from the end extending to \(-\infty\):
\(e_1,e_2,\ldots,e_{2n},e_{2n+1}\), where
\(e_{2n+1}=e_\infty\) is the unique edge extending to \(+\infty\).
The first and last edges form the pair of unbounded ends. An edge
terminating at a crossing \(v\) may also be denoted by \(e_i^v\);
when recording its local left/right position, we write \(e_l^v,e_r^v\).
For the vertex set \(V(C)\) of \(C\), the edge set used in the polynomial is
\begin{equation}\label{eq:incoming-set}
 E(C)=\coprod_{v\in V(C)}\{e_l^v,e_r^v\}.
\end{equation}
It includes the incoming edge from infinity when that edge terminates at a
crossing. The departing edge \(e_\infty\), which has no terminal crossing,
is excluded. When \(n=0\), the long curve has one edge extending to
both ends and \(E(C)=\varnothing\).
Figure~\ref{fig:incoming-edges} illustrates this convention.

\begin{figure}[htbp]
\centering
\includegraphics[width=0.84\linewidth]{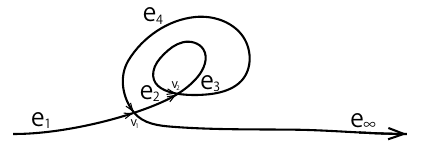}
\caption{Incoming edges of a long curve. The four edges
\(e_1,e_2,e_3,e_4\) belong to \(E(C)\), whereas \(e_\infty\notin E(C)\).}
\label{fig:incoming-edges}
\end{figure}

\subsection{The quantization polynomial \(\Stq\) and its low-order coefficients}\label{sec:b1}

Let \(C\) be an oriented long generic plane curve corresponding to an
exterior-edge based closed curve.
\begin{definition}[The polynomial \(\Stq\)]\label{def:based-Stq}
For the edge set \(E(C)\) in \eqref{eq:incoming-set}, the edge signs
\(s(e)\) (Definition~\ref{def:based-sign}) and Alexander numbering
\(\ind(e)\) (Definition~\ref{def:edge-index}), define
\[
  \Stq(C)=\sum_{e\in E(C)} s(e)q^{\ind(e)}.
\]
\end{definition}
Putting \(q=e^h\) gives
\[
  \St_{e^h}(C)=\sum_{e\in E(C)} s(e)e^{h\ind(e)},\qquad
  e^{h\ind(e)}
  =1+\ind(e)h+\tfrac12\ind(e)^2h^2+\cdots .
\]
The following identities hold:
\begin{align*}
  \sum_{e\in E(C)} s(e)&=0, \\
  \sum_{e\in E(C)} s(e)\ind(e)&=\rot(C), \\
  \frac12\sum_{e\in E(C)} s(e)\ind(e)^2&=\St(C).
\end{align*}
For completeness, we explain the rotation-number identity. Regard \(C\)
as the corresponding closed curve with a base point \(b\) on the exterior
edge \(e^b\). Here \(i=\ind(v)\) is the average of the Alexander
numberings of the four locally adjacent regions at \(v\). At each
crossing \(v\) of index \(i\), the two incoming edges have indices \(i+\tfrac12\) and \(i-\tfrac12\). By the prescribed
intersection-number signs, their combined contribution to the first
moment is either
\((i+\tfrac12)-(i-\tfrac12)\) or
\((i-\tfrac12)-(i+\tfrac12)\), hence equals
\(\sign(v,b)=-\operatorname{sgn}\det(T_1,T_2)\), where \(T_1,T_2\) are
the oriented tangents on the first and second visits to \(v\) from \(b\).
Consequently,
\[
 \sum_{e\in E(C)}s(e)\ind(e)=\sum_{v\in V(C)}\sign(v,b).
\]
Whitney's formula \cite[Theorem~1.6.17]{Fukui} (cf.\ \cite[Section~1.B, p.~451]{EkholmWolff2002}, with the opposite crossing-sign convention) gives the rotation
number of the corresponding closed curve as
\(2\ind(e^b)+\sum_v\sign(v,b)\). In the present upper closure
\(\ind(e^b)=\tfrac12\), and
\(\rot(C_{\mathrm{closed}})=\rot(C_{\mathrm{long}})+1\) by
\eqref{eq:closed-long-rot}; thus the displayed sum equals the long
rotation number \(\rot(C)\). The quadratic identity follows from
Shumakovitch's formula \cite[Section~1.5]{Sh1996}: for the exterior
base edge with \(\ind(e^b)=\tfrac12\), its correction
\(\ind(e^b)^2-\tfrac14\) vanishes.
Thus the constant, linear, and quadratic coefficients are \(0\),
\(\rot(C)\), and \(\St(C)\), respectively. Hence \(\Stq\) is a \(q\)-variable quantization of \(\St\).

For the based comparison, the vertex polynomial is given by
\begin{equation}\label{eq:vertex-poly}
 \Stqv(C)=\sum_{v\in V(C)}w(v)q^{\ind(v)},\qquad w(v)=s(e_l^v).
\end{equation}
It is used only for the comparison in Section~\ref{sec:b8}.

\subsection{Invariance and reduction under self-tangency perestroikas}\label{sec:b2}

\begin{proposition}\label{prop:based-II}
Let \(C\) be an oriented long generic plane curve. Then \(\Stq(C)\) is
unchanged by a self-tangency perestroika.
\end{proposition}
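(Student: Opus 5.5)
We compare \(\Stq\) before and after a self-tangency perestroika by a purely local bookkeeping argument. A self-tangency perestroika changes the curve only inside a small disk \(D\). Outside \(D\) the curve, the traversal order of the remaining crossings, and the Alexander numbering of the regions away from \(D\) are all unchanged. Among the crossings outside \(D\), the only thing that could change is which of \(e_l^v,e_r^v\) is encountered first from the end at \(-\infty\). That order is determined by the parametrization of the long curve, which the move does not alter. Hence every crossing outside \(D\) contributes to \(\Stq\) with the same signs and the same indices on both sides of the move.

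The work is therefore in the two new crossings \(v_1,v_2\) created inside \(D\) (for the crossing-increasing direction; the other direction is the reverse). Their four incoming edges are the new short edges inside \(D\). The edges that enter \(D\) from outside and then terminate at \(v_1\) or \(v_2\) are subdivisions of edges that previously terminated elsewhere. Since the sum in Definition~\ref{def:based-Stq} is indexed by (crossing, incoming side), subdividing an edge does not change the contribution of the crossing further along. Each crossing contributes the terms for its own two incoming edges, each weighted by the index of its adjacent regions. That index is determined locally by the two regions on either side of the arc just before the crossing, so it is unaffected by the subdivision.

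We then treat the two cases of the perestroika: directly oriented (tangent vectors parallel) and inversely oriented (antiparallel). In each case we first label the Alexander numbers of the regions in \(D\). If the regions outside the two strands have numbers \(a\) and \(a\pm1\) and the middle (bigon) region is new, the new crossings \(v_1,v_2\) both have \(\ind=i\) for a common \(i\) determined by the two strands. In either case, each of \(v_1,v_2\) contributes \(\pm\bigl(q^{i+1/2}-q^{i-1/2}\bigr)\). Write \(\alpha,\beta\) for the two branches in \(D\), with \(\alpha\) traversed first from \(-\infty\). Then both \(v_1\) and \(v_2\) are first visited along \(\alpha\) and second along \(\beta\). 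The sign of each crossing's contribution equals \(\sign(v,b)=-\operatorname{sgn}\det(T_1,T_2)\), as computed in Section~\ref{sec:b1}. Along the bigon, \(\beta\) crosses \(\alpha\) once from left to right and once from right to left, so \(\det(T_1,T_2)\) takes opposite signs at \(v_1\) and \(v_2\). The two contributions cancel, and \(\Stq\) is unchanged.

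The main obstacle is justifying cleanly that the two crossings share the same index \(i\) and that the sign rule (which of \(e_l,e_r\) comes first) really reduces to the orientation sign \(\sign(v,b)\) in both the direct and the inverse case. The inverse case reverses the local left/right picture, so it must be checked carefully. I would settle this with explicit local pictures for the two cases, using only the identity, already established in Section~\ref{sec:b1}, that a crossing's contribution is \(\sign(v,b)(q^{i+1/2}-q^{i-1/2})\) up to the orientation convention. This reduces everything to the observation that consecutive crossings of two arcs bounding a bigon have equal index and opposite local intersection signs.
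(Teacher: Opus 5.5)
Your proposal is correct and is essentially the paper's argument: contributions away from the move are unchanged, and the two crossings of the 2-gon have the same index \(i\) and opposite signs, so their incoming-edge terms \(\pm\bigl(q^{i+1/2}-q^{i-1/2}\bigr)\) cancel. Your determinant argument (both crossings are first visited along \(\alpha\), and \(\beta\) crosses \(\alpha\) in opposite directions) supplies the justification that the paper's one-line proof leaves implicit. One small slip: only two of the four incoming edges at \(v_1,v_2\) are the short bigon sides; the other two are the truncated edges entering \(D\), which your next sentence already handles correctly.
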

\begin{proof}
The two crossings of the 2-gon have the same index \(i\) and opposite
signs. The contributions of their four incoming edges are
\[
  q^{i+1/2}-q^{i-1/2}
  \quad\text{and}\quad
  -q^{i+1/2}+q^{i-1/2}.
\]
Their sum is zero.
\end{proof}

Two long curves are equivalent if and only if representatives are related
by a finite sequence of planar isotopies and self-tangency perestroikas
supported in compact subsets of the plane, with both ends along the
positive and negative directions of the \(x\)-axis fixed at infinity.
Here self-tangency perestroikas are local moves creating or removing
\emph{bounded} 2-gons. All reductions of long curves are taken up to
orientation-preserving planar isotopy, with the two ends fixed outside
a compact set.

\begin{definition}[Polygons and 2-reduced long curves]\label{def:2reduced}
Let \(C\) be a long generic plane curve and let \(R\) be a complementary
region of \(\mathbb R^2\setminus C\). If
\(\partial R\setminus V(C)\) has exactly \(n\) connected components, then
\(R\) is called an \(n\)-gon. The unbounded complementary region is treated
by the same definition. A long curve is called \emph{2-reduced} if it has no
bounded 2-gon. A 2-reduced long curve obtained from \(C\) by repeatedly applying
crossing-decreasing self-tangency perestroikas is denoted by \(C^r\).
\end{definition}

\begin{proposition}[Uniqueness of 2-reduction]\label{prop:reduction}
Let \(C\) be a long generic plane curve. Its 2-reduced representative \(C^r\)
is unique up to orientation-preserving planar isotopy with the two ends fixed
outside a compact set.
\end{proposition}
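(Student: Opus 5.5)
We prove uniqueness with a diamond-lemma (Newman's lemma) argument for the rewriting system whose moves are crossing-decreasing self-tangency perestroikas, i.e.\ removals of bounded 2-gons, taken up to orientation-preserving planar isotopy with the ends fixed. Each such move lowers \(\#V(C)\) by two, so every sequence of reductions terminates. By Newman's lemma, the normal form \(C^r\) is then unique as soon as the system is \emph{locally confluent}. Concretely: if \(C\to C_1\) removes a bounded 2-gon \(D_1\) and \(C\to C_2\) removes a bounded 2-gon \(D_2\), we must find a common curve \(C_3\) reachable from both \(C_1\) and \(C_2\) by reductions.

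The first step is to classify how two bounded 2-gons \(D_1\ne D_2\) of a generic curve can meet. Their closures are bounded by two arcs each, with two corner crossings each. We plan to show three cases.
\begin{enumerate}
\item \(D_1\) and \(D_2\) share no crossing. Then the two removals have disjoint supports. Performing one leaves the other 2-gon intact as a bounded 2-gon, so both orders reach the same \(C_3\).
\item \(D_1\) and \(D_2\) share exactly one crossing \(v\), being opposite regions at \(v\). The removals are still supported in small disjoint neighbourhoods of the open 2-gons, with only a neighbourhood of \(v\) shared. After the first removal, \(v\) disappears. We will check that the second 2-gon survives as a bounded 2-gon in the new curve, or else that both reductions already produce isotopic curves.
\item \(D_1\) and \(D_2\) share both crossings. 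The only possibility is that the two sides are parallel arcs bounding adjacent 2-gons, i.e.\ a configuration of three arcs between the same two crossings. This cannot occur for a generic curve, since each crossing has only two branches. The other possibility is the degenerate case where \(D_1\cup D_2\) together with the crossings forms a closed component (a circle crossed twice by another circle), which is impossible for a single long curve. The situation that does occur is two 2-gons \(D_1,D_2\) opposite at both corners, where two arcs bound a bigon and the unbounded side also closes. In that case removing either one yields the same curve up to isotopy, because the result is a simple arc pushed off in either direction.
\end{enumerate}

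The main obstacle is the second case and its variants: a chain of 2-gons through shared corner crossings, where the two arcs of \(D_1\) continue to form sides of \(D_2\). We need to verify, via a local picture at \(v\), that after removing \(D_1\) the region replacing \(D_2\) is still a bounded 2-gon. At \(v\), the two arcs are pulled apart, and the two corners of \(D_2\) other than \(v\) persist, so \(D_2\) merges with neighbouring regions. We expect instead that \(D_2\) becomes part of a region whose boundary count must be examined. We will argue via the Jordan curve theorem on the bounded disc \(D_1\cup D_2\cup\{v\}\) that the second removal on \(C_1\) and the first removal on \(C_2\) both amount to a single local move replacing the four-crossing chain by two disjoint arcs. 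Hence both orders give the same \(C_3\), which in this case has \(\#V(C)-4\) crossings. Boundedness is preserved because neither move touches the unbounded region, which is exactly why only bounded 2-gons are allowed. Combining the three cases gives local confluence, so Newman's lemma yields uniqueness of \(C^r\).
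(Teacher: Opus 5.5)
Your framework is sound. Bigon removals terminate, and Newman's lemma then reduces uniqueness of \(C^r\) to local confluence. This is the paper's argument in dual form: Lemma~\ref{lem:common-reduction} reorders moves so that decreasing ones come first, and the top curve of a peak ``create \(D_1\), then remove \(D_2\)'' is exactly your fork \(C\to C_1\), \(C\to C_2\).

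\textbf{The gap.} It lies in the only substantive case, two 2-gons sharing one corner, and your proposed resolution there is false. Suppose \(D_1\) has corners \(v,w\) and \(D_2\) is opposite \(D_1\) at \(v\). Then the sides of \(D_2\) are the continuations through \(v\) of the two sides of \(D_1\). So the other corner of \(D_2\) is a third crossing \(u\ne w\), and the same two branches cross consecutively at \(u,v,w\). This is a three-crossing chain, not four.
\begin{itemize}
\item Removing \(D_1\) deletes \(v,w\) and merges \(D_2\) with the region opposite \(D_1\) at \(w\), which in general is not a 2-gon.
\item Symmetrically, removing \(D_2\) merges \(D_1\) with the region opposite \(D_2\) at \(u\).
\end{itemize}
So neither of the ``second removals'' you plan exists in general, and a common reduct with \(\#V(C)-4\) crossings need not exist. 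For example, take the long curve whose returning strand weaves three times across the initial ray. Its only crossings are \(u,v,w\), so your \(C_3\) would have \(-1\) crossings.

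\textbf{The fix.} No further move is needed, because \(C_1\) and \(C_2\) are already isotopic with the ends fixed. In a disc around the chain, each consists of the same two branches crossing exactly once, at \(u\) and at \(w\) respectively. Sliding that single crossing along the strip, with support inside the disc, identifies them. Hence \(C_3=C_1\cong C_2\) with zero moves. This is precisely the content of the paper's case~(3), where the increasing move followed by the decreasing one cancels.

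\textbf{Smaller errors in your case~3.}
\begin{itemize}
\item Three arcs joining two crossings are not excluded by ``each crossing has only two branches'': they use only three of the four half-edges. They are excluded because the two outer arcs are opposite at both corners. They therefore close up into a loop avoiding the middle arc, which is impossible for a connected curve.
\item The configuration you say ``does occur'' (2-gons opposite at both corners) cannot occur for a long curve. The outer half-edges at \(v,w\) would pair up into a closed cycle containing the whole curve, leaving no ends at infinity.
\item The regions adjacent to a bounded 2-gon may be unbounded, so ``neither move touches the unbounded region'' is false. It is also not needed.
\end{itemize}
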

\begin{proof}
Apply Lemma~\ref{lem:common-reduction} to any two reduced representatives.
A nonempty increasing sequence ends by creating a 2-gon, so it cannot
end at a reduced curve. Both representatives therefore coincide with
their common reduction up to the specified isotopy.
\end{proof}

\begin{lemma}[Common 2-reduction]\label{lem:common-reduction}
Let \(C\) and \(C'\) be long curves. If they are related by self-tangency
perestroikas, they have a common 2-reduced curve from which both are obtained
using only crossing-increasing self-tangency perestroikas.
\end{lemma}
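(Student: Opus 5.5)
The plan is to use the diamond lemma (Newman's lemma) for the rewriting system whose moves are crossing-decreasing self-tangency perestroikas, i.e.\ removals of bounded 2-gons, on long curves up to orientation-preserving planar isotopy with fixed ends. Each removal lowers the crossing number by two, so the system terminates. Any two curves related by a sequence of perestroikas are joined by a zigzag of removals and creations. Confluence will then push all creations to the outside of the sequence.

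\emph{Local confluence.} Suppose \(C\) has two bounded 2-gons \(D_1\ne D_2\), with removals \(C\to C_1\) and \(C\to C_2\). I split into cases by how \(D_1\) and \(D_2\) meet.
\begin{enumerate}[label=(\roman*)]
\item If their closures share no crossing, the removals have disjoint supports and commute. The common reduct is then obtained by removing the image of the other 2-gon in each \(C_i\).
\item If they share exactly one crossing, then after removing \(D_1\) the arcs of \(D_2\) are only isotoped. The image of \(D_2\) is still a bounded 2-gon, possibly degenerated.
\item If they share both crossings, then \(D_1\cup D_2\) together with its two crossings forms a component of the curve consisting of two arcs meeting twice, arranged like a ``bigon chain''. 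In that case removing either 2-gon gives the same curve up to isotopy.
\end{enumerate}
The main obstacle is (ii) and its degenerate forms, for instance three arcs pairwise forming 2-gons around a triangle, or adjacent 2-gons along a strand. There I would first try a direct local picture check. The aim is to show either that \(C_1\cong C_2\) by an isotopy supported near \(D_1\cup D_2\), or that each \(C_i\) contains a 2-gon whose removal yields the same curve \(C_{12}\). Boundedness is preserved because the unbounded region is untouched by removal of bounded 2-gons. The ends stay fixed because the supports are compact.

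\emph{Assembling.} With termination and local confluence, Newman's lemma makes every curve have a unique normal form, and it gives Church--Rosser: curves related by a zigzag of removals and creations have the same normal form \(C^r\). Given \(C\) and \(C'\) related by self-tangency perestroikas, their normal forms therefore coincide. Reading the reduction sequences \(C\to\cdots\to C^r\) and \(C'\to\cdots\to C^r\) backwards exhibits both \(C\) and \(C'\) as obtained from the common 2-reduced curve \(C^r\) by crossing-increasing self-tangency perestroikas only. That is exactly the statement of the lemma, and it also yields Proposition~\ref{prop:reduction}.
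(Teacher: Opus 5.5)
Your overall structure is sound and matches the paper's. The paper resolves each (increasing, decreasing) peak in the zigzag by a commutation or a cancellation, pushes all decreasing moves to the front, and then reduces the valley curve further. Your Newman's-lemma packaging (termination by crossing number plus local confluence) is an equally valid way to organize the same peak analysis.

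The genuine gap is in the local confluence step, which carries all the geometric content of the lemma, and specifically in your case (ii). The claim made there is false. Removing \(D_1\) deletes both of its corners, including the shared crossing \(p\), so \(D_2\) loses a corner and merges with neighboring regions. In general \(C_1\) contains no 2-gon corresponding to \(D_2\). You then defer the case to a \emph{direct local picture check}, so local confluence, and hence the appeal to Newman's lemma, is not yet established.

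The missing observation is the one behind the paper's case (3). In a long curve every bounded 2-gon is a lens with two distinct corners, occupying exactly one quadrant at each. (A one-corner 2-gon would need two loop edges at a single crossing, which a long curve cannot have.) Suppose \(D_1\ne D_2\) share exactly one corner \(p\).
\begin{itemize}
\item They cannot occupy adjacent quadrants at \(p\): both would then be bounded by the edge through the separating half-edge, so they would share its other endpoint too.
\item So they occupy opposite quadrants. The two strands through \(p\) then cross three times in succession, at \(q_1,p,q_2\), where \(q_i\) is the other corner of \(D_i\).
\item Removing \(D_1\) leaves only \(q_2\), and removing \(D_2\) leaves only \(q_1\).
\item Take a disk neighborhood \(N\) of \(\overline{D_1\cup D_2}\). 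In both results the curve meets \(N\) in two arcs with the same interleaved endpoints on \(\partial N\), crossing once. Such configurations are isotopic rel \(\partial N\).
\end{itemize}
Thus \(C_1\cong C_2\), and the peak closes with no further moves.

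The same quadrant argument shows that your case (iii) is vacuous for long curves. Two distinct 2-gons with both corners in common force either the two unshared edges to close up into a loop through both corners, or all four edges at one corner to end at the other. Either way the curve would have a compact component, so your \emph{component} description there is incorrect, though harmless.

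Your \emph{triangle} and \emph{adjacent 2-gons along a strand} configurations share no crossings and belong to case (i), not to degenerate forms of (ii). With (ii) repaired this way, the Newman's-lemma argument goes through. It yields the lemma and also the uniqueness of the 2-reduced representative.
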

\begin{proof}
Write \(2a\) for an increasing self-tangency perestroika and \(2b\) for a
decreasing one. We distinguish the local configurations in which bounded
2-gons are created or removed by self-tangency perestroikas.
\begin{enumerate}[label=\textup{(\arabic*)}]
\item If sufficiently small disks containing the respective 2-gons are
disjoint, the two self-tangency perestroikas commute immediately.
\item If the 2-gons created or removed by the respective self-tangency
perestroikas coincide, the mutually inverse increasing/decreasing pair cancels.
\item If the two 2-gons created or removed by the respective self-tangency
perestroikas share one point, the standard local movie cancels the increasing
perestroika followed by the decreasing one.
\end{enumerate}
The creation or removal of an unbounded 2-gon is not included in the
equivalence relation for long curves, so these cases are exhaustive.

These local replacements are repeated to put all decreasing self-tangency
perestroikas before all increasing self-tangency perestroikas.
If necessary, reduce the intermediate curve further; reversing these
additional reductions gives the required increasing sequences.

For comparison, Ito--Takimura~\cite{IT2013} call the 2-gon creation and
removal operations \(2a\) and \(2b\), respectively (H2 moves); Khovanov
\cite{Khovanov1997} uses the names \(+2\) and \(-2\) moves in the doodle setting.
\end{proof}

\begin{corollary}\label{cor:complete-reduction}
For long curves, the 2-reduced representative is a complete invariant
for the equivalence relation generated by self-tangency perestroikas. Moreover,
\[
  \Stq(C)=\Stq(C^r).
\]
\end{corollary}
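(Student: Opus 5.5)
The plan is to derive both assertions of Corollary~\ref{cor:complete-reduction} from Proposition~\ref{prop:reduction} and Proposition~\ref{prop:based-II}. No new local analysis should be needed.

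\emph{Completeness.} Let \(C\) and \(C'\) be long curves, and note that \(C\) and \(C^r\) are equivalent, since \(C^r\) is obtained from \(C\) by crossing-decreasing self-tangency perestroikas.
\begin{itemize}
\item If \(C^r\) and \(C'^r\) are isotopic with the ends fixed, then \(C\sim C^r\cong C'^r\sim C'\), so \(C\) and \(C'\) are equivalent.
\item Conversely, suppose \(C\) and \(C'\) are related by self-tangency perestroikas. Lemma~\ref{lem:common-reduction} gives a common 2-reduced curve \(D\) from which both \(C\) and \(C'\) are obtained using only crossing-increasing perestroikas.
\item Reversing the sequence from \(D\) to \(C\) gives a chain of crossing-decreasing perestroikas from \(C\) ending at the 2-reduced curve \(D\). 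By the uniqueness in Proposition~\ref{prop:reduction}, \(D\cong C^r\). By the same argument \(D\cong C'^r\).
\item Hence \(C^r\cong C'^r\) up to orientation-preserving planar isotopy with the ends fixed.
\end{itemize}
Thus the isotopy class of \(C^r\) is well defined on equivalence classes and separates them, so it is a complete invariant.

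\emph{The identity \(\Stq(C)=\Stq(C^r)\).} Fix a finite chain \(C=C_0,C_1,\ldots,C_N=C^r\) of crossing-decreasing self-tangency perestroikas, possibly with planar isotopies between steps.
\begin{itemize}
\item \(\Stq\) is unchanged by planar isotopy. Indeed, \(E(C)\), the first-encounter order determining \(s(e)\), and the Alexander numbering \(\ind(e)\) are all preserved by orientation-preserving isotopy with the ends fixed.
\item Proposition~\ref{prop:based-II} gives \(\Stq(C_{k})=\Stq(C_{k+1})\) for each perestroika step.
\item Induction on \(k\) then yields \(\Stq(C)=\Stq(C^r)\).
\end{itemize}

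\emph{Main obstacle.} The only delicate point is Proposition~\ref{prop:based-II} itself, specifically the claim that the two crossings of a bounded 2-gon have the same index \(i\) and opposite signs. I take that proposition as given. If one wanted to double-check it, the approach would be:
\begin{itemize}
\item Traversing from the base end, the two strands of the 2-gon are each visited first, and then second, at the two crossings in a consistent order.
\item Consequently the sign pattern, which is \(-1/+1\) according to which incoming edge is met first, is reversed between the two crossings.
\item The four regions around each crossing have the same Alexander numbers, because the 2-gon is bounded and its two crossings share the same adjacent regions.
\item The removed edges are exactly the four incoming edges at these two crossings. The remaining edges of \(E(C)\) keep their signs, since first-encounter order at the other crossings is undisturbed, and they keep their indices.
\end{itemize}
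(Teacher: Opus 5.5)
Your proposal is correct and follows essentially the paper's route. Completeness comes from Lemma~\ref{lem:common-reduction} together with the uniqueness in Proposition~\ref{prop:reduction}, and \(\Stq(C)=\Stq(C^r)\) comes from applying Proposition~\ref{prop:based-II} step by step along the reduction; you merely make explicit both directions of completeness and the isotopy invariance of \(\Stq\) that the paper leaves implicit.

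Your optional sketch of Proposition~\ref{prop:based-II} is slightly loose, but since you take that proposition as given, the argument is unaffected. The two crossings of a bounded 2-gon share only three adjacent regions; the fourth region at each crossing is one of the two pieces into which the move splits the region between the strands, though these pieces carry the same Alexander number. Also, the sign reversal needs the fact that the local left/right roles of the two strands swap between the crossings.
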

\begin{proof}
The first assertion follows from Lemma~\ref{lem:common-reduction}: curves with
the same reduced representative are related through it by self-tangency perestroikas. The second follows from the invariance of \(\Stq(C)\)
under self-tangency perestroikas
(Proposition~\ref{prop:based-II}).
\end{proof}

\subsection{States and edge signs}\label{sec:b3}

Let \(C^r\) be an oriented long 2-reduced curve. Equivalently, one may,
as set up at the beginning of Section~\ref{sec:based}, regard \(C^r\) as the corresponding closed curve with a base point on an
exterior edge.
\begin{definition}[Edge signs]\label{def:based-sign}
Write \(e_l^v,e_r^v\) for the two incoming edges at a crossing \(v\). Traverse
the long curve in its orientation and mark the crossings according to the order
of the two visits. The associated edge signs are
\[
\begin{array}{c|cc}
  \text{order of visits} & s(e_l^v) & s(e_r^v)\\ \hline
  e_l^v\ \text{first},\ e_r^v\ \text{second} & -1 & +1\\
  e_r^v\ \text{first},\ e_l^v\ \text{second} & +1 & -1
\end{array}
\]
and hence
\[
  s(e_l^v)=-s(e_r^v).
\]
An oriented edge with terminal crossing \(v\) acquires its sign uniquely at
that crossing. We call this sign its \emph{intersection number}.
\end{definition}

Use the correspondence between an oriented long curve and the oriented
closed curve based on an exterior edge, with
\(\rot(C_{\mathrm{closed}})=\rot(C_{\mathrm{long}})+1\).
\begin{definition}[State]\label{def:state}
Let \(C\) be an oriented generic long curve or an oriented generic
closed curve with a chosen base point \(b\). Traverse \(C\) in its
orientation, from \(-\infty\) in the long case and from \(b\) in the
based closed case. The order of the two visits determines the intersection
number at each crossing \(v\); in the based closed case, this is
\(\sign(v,b)\). The \emph{state} is the map
\[
 S:V(C)\longrightarrow\{0,1\}
\]
uniquely determined by requiring \((-1)^{S(v)}\) to equal this
intersection number at every crossing \(v\).
\end{definition}
Recall that, as specified earlier in Section~\ref{sec:based}, we have fixed a
natural one-to-one correspondence between long curves and closed curves
based on exterior edges. Applying oriented smoothing at all crossings of
the based closed curve corresponding to a long curve yields the oriented
Seifert circles \(\CC_n\). The upper-right local diagrams in Figure~\ref{fig:long-marked-state}
represent the pair consisting of these circles and the state \(S\):
they display the crossing marking and the signs of the incident edges,
including the wide/single-edge data. Forgetting \(S\) leaves the
Seifert circles. Furthermore, subdividing each \(\CC_n\) at the smoothing
points gives a correspondence between each original edge \(e_i\) and an arc
on one of the Seifert circles \(\CC_n\) \cite[Notation~1]{Ito2023}.
However, on the circle containing the base point \(b\), we also include
\(b\) as a subdivision point. We denote this correspondence by
\[
  e_i\longmapsto e^{(i)}.
\]
Each \(\CC_n\) can be written as a union of these arcs.
By abuse of notation, we also write \(e_i\) for the corresponding
arcs \(e^{(i)}\).
Figure~\ref{fig:long-marked-state} also shows the long
curve and its corresponding based closed realization.

\begin{figure}[p]
\centering
\includegraphics[width=\linewidth,height=0.82\textheight,keepaspectratio]{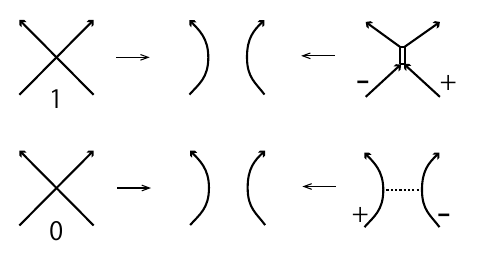}
\includegraphics[width=\linewidth]{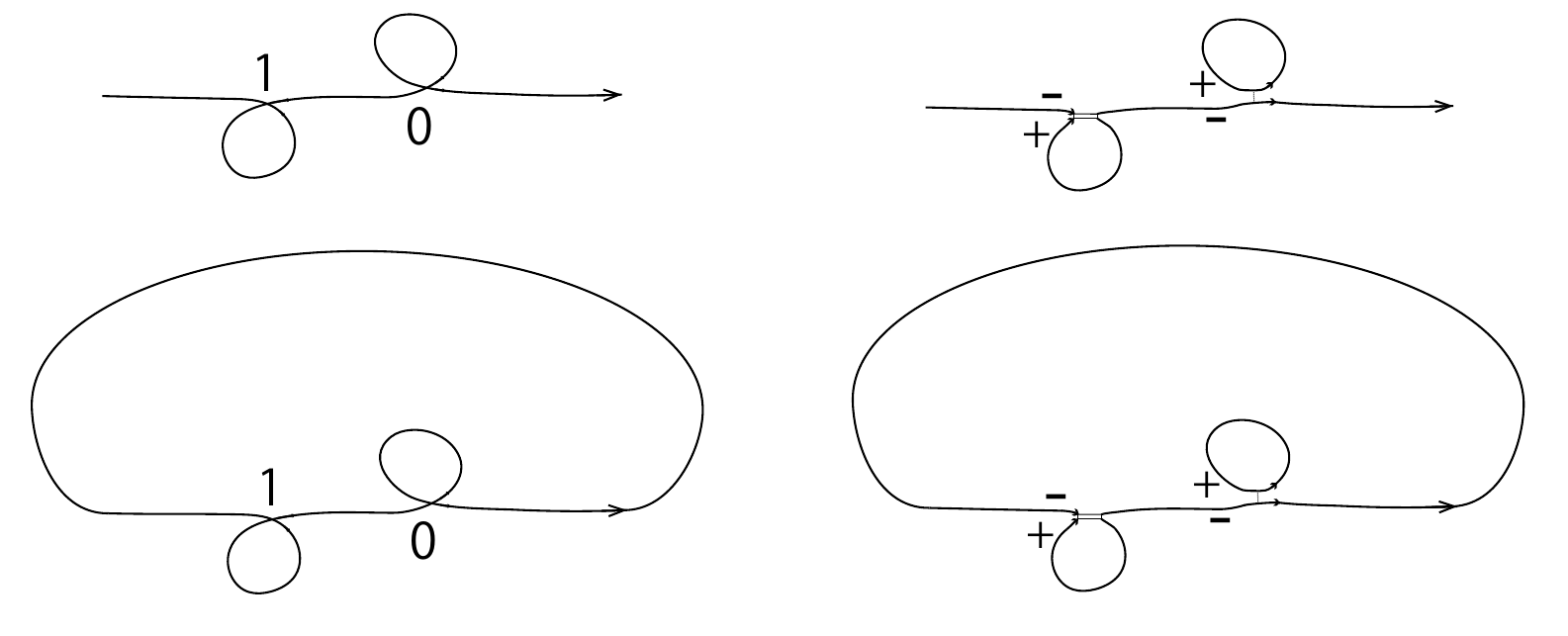}
\caption{The oriented smoothing and a diagrammatic representation of
Seifert circles together with the state (top); a long curve and its state
(middle); and the corresponding exterior-edge based closed realization
(bottom), with \(\rot(C_{\mathrm{closed}})=\rot(C_{\mathrm{long}})+1\).
The displayed local signs and labels are retained.}
\label{fig:long-marked-state}
\end{figure}

For comparison, the corresponding exterior-edge based closed realization is
shown in Figure~\ref{fig:based-marked-state}.
\begin{figure}[htbp]
\centering
\includegraphics[width=8cm]{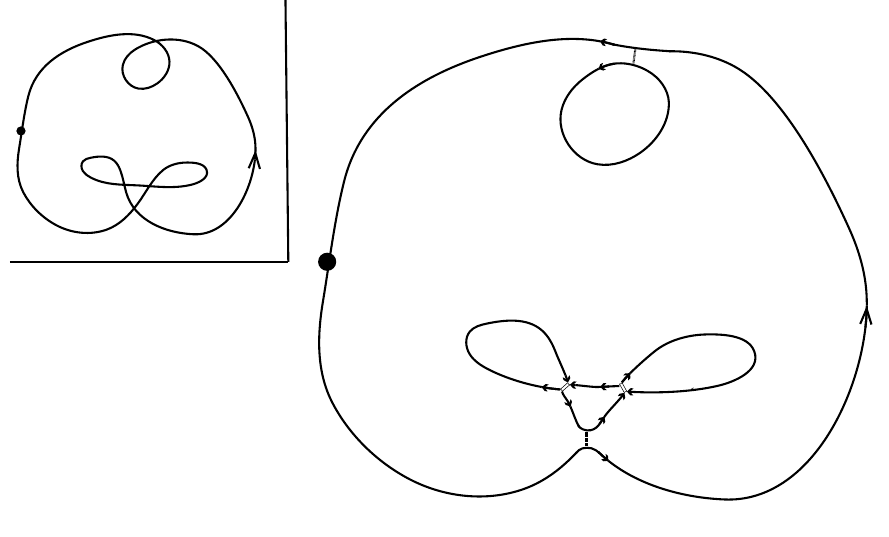}
\caption{The exterior-edge based closed realization of the state. The
base point and the local signs determine the same state as in the long
picture.}
\label{fig:based-marked-state}
\end{figure}

\subsection{Alexander numbering}\label{sec:b4}

\begin{definition}[Alexander index]\label{def:edge-index}
The Alexander index of an edge is the average of the mapping degrees of the
two adjacent regions. Write
\[
  j=\ind(e).
\]
\end{definition}
Let \(\Z\langle E(C^r)\rangle\) be the free abelian group generated by the
edge set \(E(C^r)\), and for \(e\in E(C^r)\) let \([e]\) denote the
corresponding basis generator. The signed edge element can then be grouped by
index:
\[
  \sum_{e\in E(C^r)}s(e)[e]
  =\sum_j\ \sum_{\substack{e\in E(C^r)\\\ind(e)=j}}s(e)[e].
\]

\subsection{Recovery of the polynomial, rotation, and strangeness}
\label{sec:b5}

The edge evaluation \([e]\mapsto q^{\ind(e)}\) gives
\[
  \sum_{e\in E(C^r)} s(e)[e]\longmapsto
  \sum_{e\in E(C^r)} s(e)q^{\ind(e)}
  =\Stq(C^r)=\Stq(C).
\]
The evaluations
\[
  [e]\longmapsto\ind(e),\qquad
  [e]\longmapsto\tfrac12\ind(e)^2
\]
recover \(\rot(C)\) and \(\St(C)\) by the identities from
Section~\ref{sec:b1}:
\[
  \sum_{e\in E(C^r)}s(e)\ind(e)=\rot(C),\qquad
  \frac12\sum_{e\in E(C^r)}s(e)\ind(e)^2=\St(C).
\]

\begin{definition}[Circle weight]\label{def:circle-weight}
For each Seifert circle \(\CC_n\), let
\[
 E_n=\{e\in E(C^r)\mid e\text{ corresponds to an arc on }\CC_n\}.
\]
Its weight is
\[
 k(\CC_n):=\sum_{e\in E_n}s(e).
\]
\end{definition}

\subsection{Weighted circle homology}\label{sec:b6}

The input throughout this construction is \(C^r\). We use the
corresponding exterior-edge based closed curve fixed in
\eqref{eq:closed-long-rot}; in particular, no second closure convention is
introduced. The edge \(e_\infty\) is excluded from \(E(C^r)\).

For each Seifert circle \(\CC_n\), use the weight of
Definition~\ref{def:circle-weight}. The edge
index is constant on \(E_n\), and we set
\[
 j_n=j(\CC_n)=\ind(e)\quad(e\in E_n),\qquad
 k_n=k(\CC_n).
\]

In the crossing-free special case, via the CW decomposition described
below, we set \(j=1/2\) and \(k=0\).
Give each circle the CW decomposition induced by this subdivision, with the
base point \(b\) a 0-cell on the circle containing it from the outset.
We also allow the minimal decomposition
with one 0-cell and one loop 1-cell.

Orient the 1-cells consistently with
\(\CC_n\); their sum represents the fundamental class
\([\CC_n]\in H_1(\CC_n;\Z)\).
Following the fundamental-class model of \cite{Ito2026}, we use
only these fundamental classes as generators and place them in homological
degree \(1\):
\begin{equation}\label{eq:circle-chain}
 C^{i,j,k}(C^r)=
 \begin{cases}
 \displaystyle\bigoplus_{\substack{n:\ j_n=j\\k_n=k}}
      \Z\langle[\CC_n]\rangle,&i=1,\\[3pt]
 0,&i\ne1.
 \end{cases}
\end{equation}
Define the differential \(d=0\) on these generators and set
\[
 H^{i,j,k}(C^r)=H^i\bigl(C^{*,j,k}(C^r),d\bigr).
\]
We call \(H^{i,j,k}(C^r)\) the \emph{weighted circle homology}.
As a special case, for the crossing-free based representative, the single
circle has \((i,j,k)=(1,\tfrac12,0)\), with its CW decomposition based
at the point inherited from the corresponding based closed curve.
\begin{unnumberedremark}[The fundamental-class model]
This model retains the fundamental classes in degree \(1\), but not the
usual \(H_0(\CC_n;\Z)\cong\Z\). The differential \(d=0\) is the
one defined on the fundamental-class generators; it is not the cellular
boundary map of the chosen CW decomposition.
\end{unnumberedremark}

Retain the full graded-rank polynomial
\[
 \mathcal P(C^r;t,q,u)=\sum_{i,j,k}t^iq^ju^k\,\rank H^{i,j,k}(C^r).
\]
Since every generator is in degree \(1\),
\begin{equation}\label{eq:circle-D}
 \left.\frac{\partial}{\partial u}\mathcal P(C^r;t,q,u)\right|_{u=1}
 =t\sum_n k_nq^{j_n}.
\end{equation}
The signed terms regroup as
\begin{equation}\label{eq:circle-regroup}
 \sum_n k_nq^{j_n}
 =\sum_n\sum_{e\in E_n}s(e)q^{\ind(e)}
 =\sum_{e\in E(C^r)}s(e)q^{\ind(e)}=\Stq(C).
\end{equation}

\begin{theorem}[Based homological lift]\label{thm:based-lift}
The triply graded isomorphism type of \(H^{i,j,k}(C^r)\) is an invariant of
long curves up to self-tangency perestroikas. The principal \(t=-1\)
specialization of its graded-rank polynomial satisfies
\[
 \left.\frac{\partial}{\partial u}\mathcal P(C^r;-1,q,u)\right|_{u=1}
 =-\Stq(C).
\]
\end{theorem}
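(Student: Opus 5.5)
The theorem has two parts, invariance and decategorification, and I will prove them separately.

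For invariance, the plan is to route everything through the 2-reduced representative. By construction \(H^{i,j,k}\) is defined from \(C^r\) alone. By Proposition~\ref{prop:reduction}, \(C^r\) is unique up to orientation-preserving planar isotopy with the ends fixed. By Corollary~\ref{cor:complete-reduction}, two long curves related by self-tangency perestroikas have isotopic reductions. So it suffices to show that the triply graded group \(C^{*,*,*}(C^r)\) depends only on the isotopy class of \(C^r\). Such an isotopy (ends fixed) carries the following data of \(C^r\) to the corresponding data of the image curve:
\begin{itemize}
\item the crossings, the traversal order from \(-\infty\), and hence the state \(S\) and the edge signs \(s(e)\);
\item the Alexander numbering of the regions, hence \(\ind(e)\);
\item the oriented smoothing, hence the Seifert circles \(\CC_n\) and the sets \(E_n\).
\end{itemize}
Hence it induces a bijection of Seifert circles preserving \((j_n,k_n)\), and sends fundamental classes to fundamental classes. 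This gives an isomorphism of the graded groups in \eqref{eq:circle-chain}. Since \(d=0\), homology equals chains and the isomorphism passes to \(H\). The choice of CW decomposition is irrelevant, since only the fundamental classes are used as generators.

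The main obstacle is the well-definedness of \(j_n\): one must check that \(\ind(e)\) is constant on \(E_n\), which the text asserts without proof.
\begin{itemize}
\item \emph{Arcs of a single circle.} Under oriented smoothing, an arc of \(\CC_n\) has the same two adjacent regions (up to merging) as the original edge, so its index equals \(\ind(e)\).
\item \emph{Consecutive arcs.} Two consecutive arcs of \(\CC_n\) meet at a smoothing point of a crossing \(v\). The outgoing arc of the smoothing continues the incoming edge on the same side. At a crossing of index \(i\), the incoming edge and the outgoing edge on that side both border the same pair of regions, of indices \(i\pm\tfrac12\) shifted consistently, so the Alexander index is preserved along the arc.
\item \emph{Base point.} The base point \(b\) sits on an exterior edge of index \(\tfrac12\), so subdividing at \(b\) does not change this.
\end{itemize}
I would verify this with the local picture of the oriented smoothing: the Alexander numbering of a Seifert circle's side regions is locally constant along the circle. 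This is the standard fact that the Seifert circles are level sets of the Alexander numbering in the smoothed diagram. For the crossing-free case the convention \(j=\tfrac12\), \(k=0\) is consistent with this, being the index of the single exterior edge.

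For the decategorification formula, I would specialize \eqref{eq:circle-D} at \(t=-1\), which gives \(-\sum_n k_nq^{j_n}\). Substituting the regrouping \eqref{eq:circle-regroup} then turns this into \(-\Stq(C^r)\). The regrouping is valid because \(E(C^r)=\coprod_n E_n\): each incoming edge corresponds to exactly one arc on exactly one Seifert circle, and \(e_\infty\) is excluded on both sides. It also uses the constancy of \(\ind\) on \(E_n\) established above. Finally, Corollary~\ref{cor:complete-reduction} gives \(\Stq(C^r)=\Stq(C)\), completing the identity.
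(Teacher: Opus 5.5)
Your proposal is correct and follows the paper's proof. Invariance comes from the uniqueness of \(C^r\) up to isotopy, which identifies the oriented Seifert circles together with their gradings \((j_n,k_n)\); the specialization is \eqref{eq:circle-D} at \(t=-1\) combined with \eqref{eq:circle-regroup}, and you additionally supply two details the paper uses without comment: that \(\ind\) is constant along each Seifert circle (the oriented smoothing joins the incoming and outgoing half-edges of index \(i+\tfrac12\), resp.\ \(i-\tfrac12\)), and that \(E(C^r)=\coprod_n E_n\).
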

\begin{remark}[The specialization at \(t=1\)]\label{rem:based-t1}
The specialization
\[
 \left.\frac{\partial}{\partial u}\mathcal P(C^r;1,q,u)\right|_{u=1}
 =\Stq(C)
\]
recovers \(\Stq(C)\) without the overall homological sign.
\end{remark}
\begin{proof}[Proof of Theorem~\ref{thm:based-lift}]
The uniqueness of \(C^r\) (Proposition~\ref{prop:reduction}) identifies the oriented Seifert circles and preserves
\(j\) and \(k\). Equations~\eqref{eq:circle-D} and
\eqref{eq:circle-regroup} give the displayed specialization.
\end{proof}
\begin{remark}[Sign conventions]\label{rem:based-signs}
For an exterior-edge based curve, let \(T_1,T_2\) be the oriented
tangent vectors on the first and second visits to a crossing \(v\) from
the base point \(b\). The intersection-number convention assigns
\(\sign(v,b)=-\operatorname{sgn}\det(T_1,T_2)\), opposite to the
ordered determinant convention. Since the generators in
\eqref{eq:circle-chain} have homological degree \(1\), the
\(t=-1\) decategorification also produces an overall minus sign.
\end{remark}

\begin{remark}[An edge-homological lift in the based case]\label{rem:based-edge}
Via the correspondence between oriented long curves and exterior-edge
based oriented closed curves, there is also a natural edge-homological
construction in the based case. We include its sketch because it is
parallel to the unbased closed-curve construction of Section~\ref{sec:u7}.
It does not have the Alexander-index grading \(j\) and edge-weight grading \(k\) of the weighted circle homology,
and is therefore secondary to the triply graded construction here.
Nevertheless, its direct geometric relation to the closed case makes
this supplementary construction useful. Define the edge complex
\[
  \CE(C):\qquad 0\longrightarrow \Z\langle E(C)\rangle
  \xrightarrow{\partial}\Z\langle V(C)\rangle
  \longrightarrow0,
  \qquad \partial(e^v)=v,
\]
where \(e^v\) denotes an edge with terminal crossing \(v\). The edge and
vertex groups lie in degrees \(1\) and \(0\). For each crossing,
\[
  c_v=[e_l^v]-[e_r^v],\qquad z_v=s(e_l^v)c_v
\]
are a canonical cycle and a signed canonical cycle, respectively. Writing
\(i=\ind(v)\), the local index convention is
\[
  \ind(e_l^v)=i+\tfrac12,\qquad
  \ind(e_r^v)=i-\tfrac12.
\]
Define a signed character on the canonical cycle basis by
\[
  \ch_q(c_v)=s(e_l^v)\bigl(q^{i+1/2}-q^{i-1/2}\bigr)=s(e_l^v)q^{\ind(e_l^v)}+s(e_r^v)q^{\ind(e_r^v)}.
\]
For this edge complex,
\[
  H_0(\CE(C))=0,\qquad
  H_1(\CE(C))=\ker\partial=\Z\langle c_v\rangle .
\]
With \(\ch_q H_1(\CE(C)):=\sum_v\ch_q(c_v)\), one obtains
\[
  \ch_q H_1(\CE(C))=\Stq(C).
\]
Regrouping the edge terms by Seifert circle yields \(\sum_n k_nq^{j_n}\).
This is a regrouping of character terms, \emph{not} a \((j,k)\)-grading
decomposition of \(H_1\). Its character polynomial is
\[
  \mathcal P_{\CE}(C;t,q)
  =\sum_i t^i\ch_q H_i(\CE(C))=t\Stq(C).
\]
Consequently its values at \(t=1\) and \(t=-1\) are \(\Stq(C)\) and
\(-\Stq(C)\). For a crossing-free long curve, both modules in this supplementary
construction are zero. In the weighted circle homology, by contrast, the corresponding based closed curve gives one Seifert circle whose
fundamental class is retained as a generator.
\end{remark}

\subsection{Strictness and the shared connected-sum family}
\label{sec:b8}

The invariant compared here is the triply graded isomorphism type of
\(H^{i,j,k}(C^r)\). The aim is to exhibit information gained through categorification, rather than to classify long curves
up to self-tangency perestroikas.
In Figure~\ref{fig:four-crossing-pair}(a), let \(C_1\) be the left
four-crossing curve and \(C_2\) the right one (from
Arnold~\cite[p.~15, Fig.~12]{Arnold1994}). The comparison is
\begin{align*}
 \#V(C_1)&=\#V(C_2)=4,&\rot(C_1)&=\rot(C_2)=0,\\
 \St(C_1)&=\St(C_2)=0,&\Stq(C_1)&=\Stq(C_2),\\
 \Stqv(C_1)&=\Stqv(C_2),\\
 H^{*,*,*}(C_1^r)&\not\cong H^{*,*,*}(C_2^r)
 &&\text{as triply graded groups}.
\end{align*}
The reductions in part (b)
have four and zero crossings, respectively. Their based values are
\begin{equation}\label{eq:based-pair-values}
 \Stq(C_1)=\Stq(C_2)=0,\qquad \Stqv(C_1)=\Stqv(C_2)=0.
\end{equation}

Using the two four-crossing curves \(C_1\) and \(C_2\) as seeds, define
\begin{align*}
 \Gamma_0&=C_1,&\Gamma_{m+1}&=\Gamma_m\#C_1,\\
 K_0&=C_2,&K_{m+1}&=K_m\#C_2\qquad(m\geq0).
\end{align*}
The comparison for these long curves is as follows:
\begin{align*}
 \#V(\Gamma_m)&=\#V(K_m)=4(m+1),&
 \rot(\Gamma_m)&=\rot(K_m)=0,\\
 \St(\Gamma_m)&=\St(K_m)=0,&
 \Stq(\Gamma_m)&=\Stq(K_m)=0,\\
 \Stqv(\Gamma_m)&=\Stqv(K_m)=0,\\
 H^{*,*,*}(\Gamma_m^r)&\not\cong H^{*,*,*}(K_m^r)\qquad(m\geq0).
\end{align*}
Here \(m\) counts connected-sum operations, so each curve has
\(m+1\) factors. The upper sequence is shown in
Figure~\ref{fig:shared-family}; the indicated exterior arcs are joined
compatibly with their orientations and the exterior base point is retained.
Its displayed reduced representative has \(4(m+1)\) crossings.
The lower sequence \(K_m\) reduces to a single circle.
Figure~\ref{fig:state-family} shows, for each factor, two circles of index
\(3/2\), one with signed sum \(+1\) and one with signed sum \(-1\), and
two non-outer circles of index \(1/2\), each with signed sum \(+1\).
The entire connected sum has one outer circle of index \(1/2\); each factor
contributes \(-2\) to its signed sum. Hence the complete graded ranks, for \(m\geq0\), are
\begin{center}
\begin{tabular}{lcccc}
\toprule
Circle type & \(i\) & \(j\) & \(k\) & Rank\\
\midrule
\(j=3/2,\ k=1\) & 1 & \(3/2\) & \(1\) & \(m+1\)\\
\(j=3/2,\ k=-1\) & 1 & \(3/2\) & \(-1\) & \(m+1\)\\
Non-outer \(j=1/2\) & 1 & \(1/2\) & \(1\) & \(2(m+1)\)\\
Outer \(j=1/2\) & 1 & \(1/2\) & \(-2(m+1)\) & \(1\)\\
\bottomrule
\end{tabular}
\end{center}
All unlisted graded parts are zero. In particular,
\[
 \mathcal P(\Gamma_m^r;t,q,u)=t\bigl[(m+1)q^{3/2}(u+u^{-1})+q^{1/2}\{2(m+1)u+u^{-2(m+1)}\}\bigr].
\]
The total rank is \(4m+5\).
Differentiating at \(u=1\) cancels the two \(j=3/2\) contributions and
the non-outer/outer \(j=1/2\) contributions separately. At every factor, the four crossings have index \(1\) and vertex weights
\(+1,+1,-1,-1\). Hence the vertex quantization \(\Stqv\) of \cite{Ito2023}
(for the definition, see \eqref{eq:vertex-poly}) satisfies
\[
 \Stqv(\Gamma_m)=(m+1)(q+q-q-q)=0.
\]
Thus
\[
 \Stq(\Gamma_m)=\Stqv(\Gamma_m)=0\qquad(m\geq0).
\]
For \(C_2^r\), the only graded group is
\(H^{1,1/2,0}(C_2^r)\cong\Z\), and
\(\mathcal P(C_2^r;t,q,u)=tq^{1/2}\). This Poincar\'e polynomial
is nonzero but independent of \(u\), so
\(\partial\mathcal P(C_2^r;t,q,u)/\partial u=0\). Hence the
recovery formula gives \(\Stq(C_2)=0\). The vertex quantization
\(\Stqv(C_2)\) vanishes because the 2-reduced long representative has
no crossings. By invariance under self-tangency perestroikas,
these values also hold before reduction,
giving \eqref{eq:based-pair-values}. Thus, the lower sequence of Figure~\ref{fig:shared-family} also satisfies
\[
 \Stq(K_m)=\Stqv(K_m)=0\qquad(m\geq0),
\]
since \(K_m^r\) is plane isotopic to \(C_2^r\) for any \(m\).

\emph{Nevertheless}, the graded ranks distinguish the pair and all
members of \(\{\Gamma_m\}_{m\geq0}\), proving the asserted strictness.

\begin{figure}[htbp]
\centering
\includegraphics[width=\linewidth]{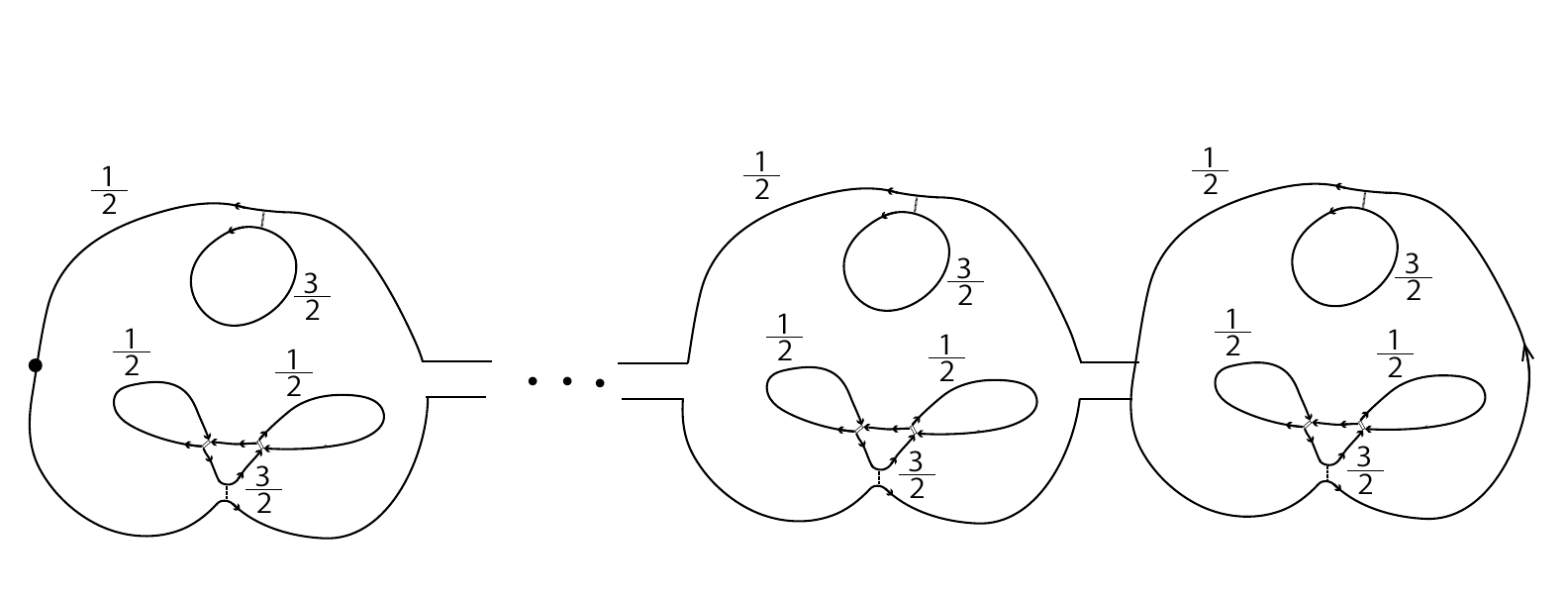}
\caption{The state of the curve \(\Gamma_m\) obtained by connected sums. Each factor
has two index-\(3/2\) circles and two non-outer index-\(1/2\) circles,
and the entire connected sum has one outer index-\(1/2\) circle. The circles used in the calculation are identified by their index and by
outer/non-outer type.}
\label{fig:state-family}
\end{figure}

\begin{figure}[htbp]
\centering
\includegraphics[width=0.94\linewidth]{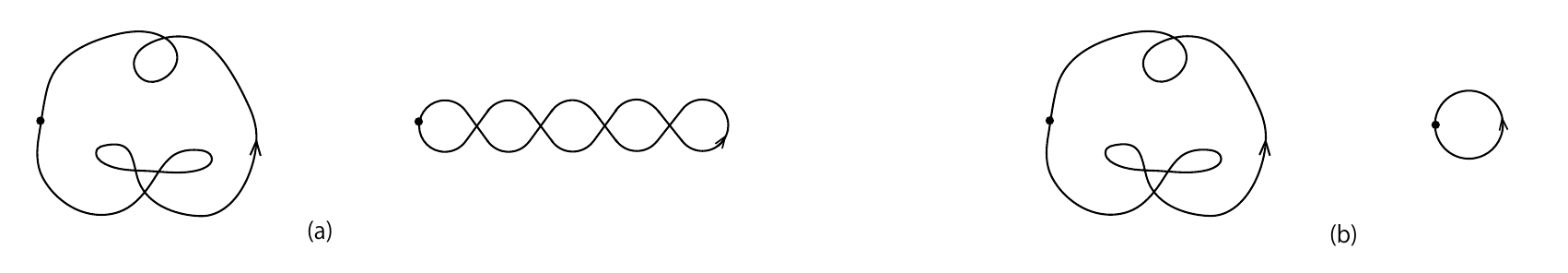}
\caption{The shared four-crossing pair (a) and its 2-reductions
(b), with \(C_1\) on the left and \(C_2\) on the right. The reduced
crossing counts are \(4\) and \(0\). Black points mark exterior base
points in the based case and temporary base points in the unbased case.}
\label{fig:four-crossing-pair}
\end{figure}

\begin{figure}[htbp]
\centering
\textbf{(a)}\par\smallskip
\includegraphics[width=\linewidth]{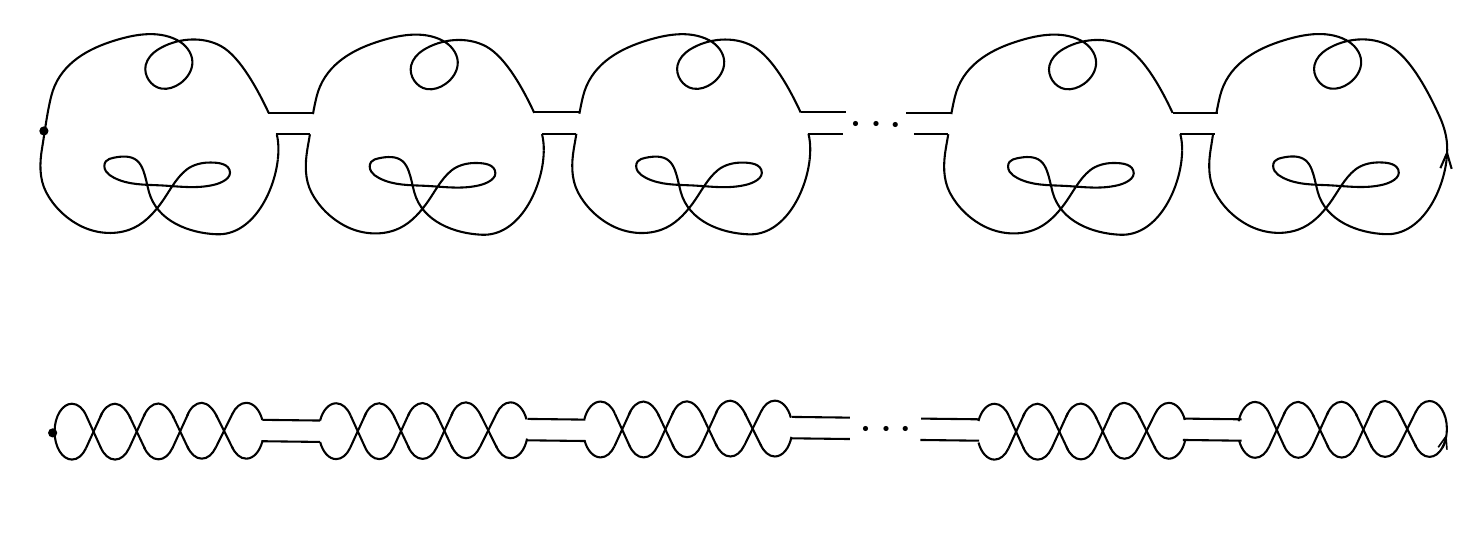}
\par\medskip\textbf{(b)}\par\smallskip
\includegraphics[width=\linewidth]{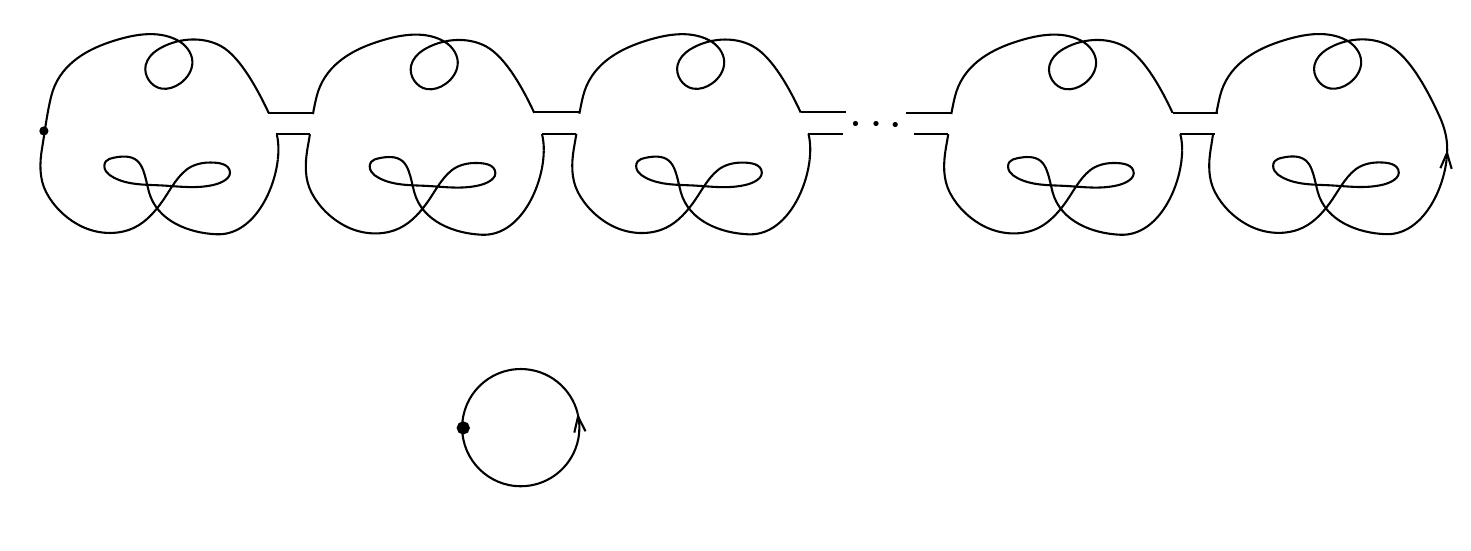}
\caption{The shared connected-sum construction and its 2-reductions.
(a) Upper row: \(\Gamma_m\); lower row: \(K_m\).
(b) Upper row: \(\Gamma_m^r\); lower row: \(K_m^r\).
The upper row uses copies of \(C_1\), and the lower row copies of \(C_2\).
After \(m\geq0\) connected-sum operations there are \(m+1\) factors,
and the reduced crossing counts are \(4(m+1)\) and \(0\), respectively.
These same diagrams are used in the based and unbased discussions.}
\label{fig:shared-family}
\end{figure}
\clearpage

\section{Unbased closed curves}\label{sec:unbased}

Let \(C\) be an oriented generic closed plane curve with no distinguished
base point. The temporary base point and the base-edge convention are fixed
in the following notation.

\begin{notation}[Base-edge convention]\label{not:base-edge}
Choose a temporary base point \(b\) in the interior of an edge of \(C\).
The edge containing \(b\) is the \emph{base edge}, denoted by \(e^b\).
If \(C\) has \(n\ge1\) crossings, number the oriented edges
\(e_1,e_2,\ldots,e_{2n}\) in the order encountered
while following the orientation of \(C\) from \(b\),
taking the base edge \(e_1=e^b\), which contains \(b\),
as the starting edge. For an edge terminating at
\(v\), we may write \(e_i^v\); in particular, the original base edge
is \(e_1^v\) when its terminal crossing is \(v\). We retain these
edges, with their intersection-number signs, as the set \(E(C)\).
At a crossing \(v\), \(e_l^v,e_r^v\) denote the two incoming edges when
the local left/right distinction is needed.
For the temporary base point \(b\), let \(T_1,T_2\) be the oriented tangent
vectors at the first and second visits to a crossing \(v\), and put
\(\sign(v,b)=-\operatorname{sgn}\det(T_1,T_2)\). The signs of the two
incoming edges at \(v\) follow the traversal order from \(b\):
\[
\begin{array}{c|cc}
  \text{first incoming edge} & s(e_l^v) & s(e_r^v)\\ \hline
  e_l^v & -1 & +1\\
  e_r^v & +1 & -1
\end{array}
\]
When \(C\) has no crossings, \(e^b\) denotes the whole circle and
\(E(C)=\varnothing\).

Introduce two symbols \(e_0^b\), \(e_1^b\), corresponding respectively
to the side entering \(b\) and the side leaving \(b\). When crossings
exist, they arise by splitting a copy of \(e^b\) at \(b\); when there
are no crossings, they are two formal generators associated with \(b\).
The contribution at the base edge is, when its terminal crossing exists,
\[
  s(e_1^v)[e_1^v]+[e_0^b]+[e_1^b],
  \qquad s(e_0^b)=s(e_1^b)=1.
\]
In both cases, set
\[
  E_b(C)=E(C)\sqcup\{e_0^b,e_1^b\},
\]
where \(\ind(e_0^b)=\ind(e_1^b)=\ind(e^b)\).
\end{notation}

\subsection{The polynomial \(\Stq\) and its low-order coefficients}\label{sec:u1}

Shumakovitch~\cite[Section~1.5]{Sh1996} gives the
following base-point-independent formula:
\[
  \St(C)=\frac12\sum_{e\in E(C)}s(e)\ind(e)^2
          +\ind(e^b)^2-\frac14 .
\]
The crossing sign \(\sign(v,b)\) follows the convention given at the
beginning of Section~\ref{sec:unbased}, equivalently in
Definition~\ref{def:based-sign}. This convention agrees with
\cite[Section~1.7.D]{Sh1996}. The edge/vertex moment
identity and base-point independence are given in
\cite[Lemmas~2.1.A and~2.2.A]{Sh1996}.
Omitting the shift \(-1/4\), define
\[
  \St^e(C)=\frac12\left(\sum_{e\in E(C)}s(e)\ind(e)^2+2\ind(e^b)^2\right)=\frac12\sum_{e\in E_b(C)}s(e)\ind(e)^2.
\]
Consequently the relation to the classical strangeness is
\[
  \St^e(C)=\St(C)+\frac14.
\]
With the base-edge convention of Notation~\ref{not:base-edge}, define
\begin{equation}\label{eq:u-polynomial}
  \Stq(C)=\sum_{e\in E_b(C)}s(e)q^{\ind(e)}
          =\sum_{e\in E(C)}s(e)q^{\ind(e)}
            +2q^{\ind(e^b)}.
\end{equation}

With the crossing-sign convention fixed above, Whitney's formula in Fukui's revised text
\cite[Section~1.6.3, Theorem~1.6.17, p.~36]{Fukui}
(cf.\ \cite[Section~1.B, p.~451]{EkholmWolff2002}, with the opposite crossing-sign convention) is then
\[
  \rot(C)=2\ind(e^b)+\sum_v\sign(v,b).
\]
At each crossing \(v\) of index \(i\), the incoming edges have indices
\(i+\tfrac12\) and \(i-\tfrac12\). Their signed first-moment
contribution is
\((i+\tfrac12)-(i-\tfrac12)\) or its negative, according to
the intersection-number convention; it equals \(\sign(v,b)\). Thus
\[
 \sum_{e\in E(C)}s(e)\ind(e)=\sum_{v\in V(C)}\sign(v,b).
\]
Combining this identity with Whitney's formula, for the edge set
\(E(C)\) we have
\[
  \sum_{e\in E(C)}s(e)=0,\qquad
  \sum_{e\in E(C)}s(e)\ind(e)=\rot(C)-2\ind(e^b).
\]
Under the base-edge convention of Notation~\ref{not:base-edge},
the corresponding sums over \(E_b(C)\) become
\begin{align}
  \sum_{e\in E_b(C)}s(e)&=2, \notag\\
  \sum_{e\in E_b(C)}s(e)\ind(e)
    &=\rot(C)-2\ind(e^b)+2\ind(e^b)=\rot(C),
      \label{eq:u-linear}\\
  \frac12\sum_{e\in E_b(C)}s(e)\ind(e)^2
    &=\St^e(C). \label{eq:u-quadratic}
\end{align}
In particular,
\[
  \St_{e^h}(C)=2+\rot(C)h+\St^e(C)h^2+O(h^3),
  \qquad \Stq(C)\big|_{q=1}=2.
\]
Thus the low-order coefficients of \(\Stq\) recover \(\rot(C)\) and \(\St^e(C)\), with constant coefficient \(2\).

\begin{lemma}[Base-point independence]\label{lem:u-basepoint-poly}
The polynomial \(\Stq\) in \eqref{eq:u-polynomial} is independent of the temporary
base point.
\end{lemma}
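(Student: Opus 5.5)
Since any two temporary base points are connected by moving \(b\) along the curve, it suffices to treat the elementary move in which \(b\) passes through a single crossing. Moving \(b\) inside one edge changes nothing: the sets \(E(C)\), the traversal order, and \(\ind(e^b)\) are all unchanged. So I fix a crossing \(v\) and compare a base point \(b\) on the edge \(e\) terminating at \(v\) with a base point \(b'\) on the edge \(e'\) leaving \(v\) along the same branch. The crossing-free case is trivial, since then \(\Stq(C)=2q^{\ind(e^b)}\) with \(e^b\) the whole circle.

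I will split \(\Stq(C)=\sum_{e\in E(C)}s(e)q^{\ind(e)}+2q^{\ind(e^b)}\) into three parts: contributions from crossings \(w\ne v\), the contribution from \(v\), and the base term.

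\emph{Crossings \(w\ne v\).} Moving \(b\) across \(v\) only removes a short arc at the start of the traversal and appends it at the end. This arc contains no visit to \(w\), so the relative order of the two visits to \(w\) is unchanged. Hence \(s(e_l^w)\) and \(s(e_r^w)\) are unchanged, and these contributions agree.

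\emph{The crossing \(v\).} Its two incoming edges are \(e\) and the other incoming edge \(f\). For \(b\), the edge \(e\) is traversed first. For \(b'\), the branch through \(e\) becomes the last visit, so \(f\) is first. The order of visits at \(v\) flips, so both signs at \(v\) flip. By the computation in Proposition~\ref{prop:based-II}, the contribution of \(v\) is \(\pm(q^{i+1/2}-q^{i-1/2})\) with \(i=\ind(v)\). It therefore changes by \(\pm2(q^{i+1/2}-q^{i-1/2})\), with the sign fixed by whether \(e\) is \(e_l^v\) or \(e_r^v\).

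\emph{The base term.} It changes from \(2q^{\ind(e)}\) to \(2q^{\ind(e')}\). The key local fact is that at a crossing the two branches cross transversally. The outgoing edge \(e'\) on the same branch as \(e\) therefore has the regions on its sides shifted, and \(\{\ind(e),\ind(e')\}=\{i+\tfrac12,i-\tfrac12\}\). More precisely, if \(e=e_l^v\) with \(\ind(e)=i+\tfrac12\), then \(\ind(e')=i-\tfrac12\), and symmetrically for \(e=e_r^v\). Thus the base term changes by \(\mp2(q^{i+1/2}-q^{i-1/2})\).

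The main obstacle is to check that these two signs are opposite, so that the two changes cancel. This requires matching the left/right convention with the index convention \(\ind(e_l^v)=i+\tfrac12\) and \(\ind(e_r^v)=i-\tfrac12\) from Remark~\ref{rem:based-edge}, and with the sign table of Notation~\ref{not:base-edge}.

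I will carry out the case \(e=e_l^v\). Under \(b\), \(e_l^v\) is first, so \(s(e_l^v)=-1\) and the contribution of \(v\) is \(-(q^{i+1/2}-q^{i-1/2})\). Under \(b'\) the contribution becomes \(+(q^{i+1/2}-q^{i-1/2})\), a change of \(+2(q^{i+1/2}-q^{i-1/2})\). The base term goes from \(2q^{i+1/2}\) to \(2q^{i-1/2}\), a change of \(-2(q^{i+1/2}-q^{i-1/2})\). The two changes cancel. The case \(e=e_r^v\) is symmetric.

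As a sanity check, the resulting independence is consistent at order \(h^1\) with Whitney's formula and at order \(h^2\) with Shumakovitch's base-point independence of \(\St^e\) \cite[Lemma~2.2.A]{Sh1996}. The remaining care point is to justify from a local picture that \(\ind(e')\) lies on the opposite side of \(i\) from \(\ind(e)\), rather than equalling it.
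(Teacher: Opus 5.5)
Your proposal is correct and is essentially the paper's proof. The paper also reduces to \(b\) passing through a single crossing of index \(i\), either from the lower-left to the upper-right branch or from the lower-right to the upper-left. It notes that the two edge contributions at that crossing flip sign while the base term \(2q^{\ind(e^b)}\) moves between \(2q^{i+1/2}\) and \(2q^{i-1/2}\), and it checks the cancellation exactly as in your case \(e=e_l^v\).

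Your extra points only make explicit what the paper leaves implicit: other crossings keep their visit order, the crossing-free case is trivial, and the regions around \(v\) (drawn with both branches pointing upward) have indices \(i+1\) on the left, \(i-1\) on the right and \(i\) above and below, which gives \(\ind(e')=i\mp\tfrac12\).
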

\begin{proof}
Suppose that \(b\) passes through a crossing of index \(i\) from the
lower-left branch to the upper-right branch. Its crossing intersection
number changes from negative to positive. The change in \(\Stq\) is
the sum of the difference in the contributions of the incident edges
and the difference in the base-point correction term:
\[
  (q^{i+1/2}-q^{i-1/2})-(q^{i-1/2}-q^{i+1/2})
  +2(q^{i-1/2}-q^{i+1/2})=0.
\]
For passage from the lower-right branch to the upper-left branch, both
differences reverse sign, giving explicitly
\[
  (q^{i-1/2}-q^{i+1/2})-(q^{i+1/2}-q^{i-1/2})
  +2(q^{i+1/2}-q^{i-1/2})=0.
\]
\end{proof}

The base-point independence of \(\St(C)+\tfrac14\) also follows from
Lemma~\ref{lem:u-basepoint-poly}: since \(\Stq(C)\) is independent of the
temporary base point, each coefficient of the Maclaurin expansion of
\(\St_{e^h}(C)\) at \(h=0\) is also base-point independent. Its quadratic
coefficient is \(\St(C)+\tfrac14\).

\subsection{Invariance and 2-reduction}\label{sec:u2}

\begin{proposition}[Uniqueness of unbased 2-reduction]\label{prop:u-unique-reduction}
Let \(C\) be an oriented generic closed plane curve without a specified
base point. Repeated crossing-decreasing self-tangency perestroikas until
no bounded 2-gons remain give a 2-reduced curve \(C^r\), uniquely determined up to orientation-preserving planar isotopy.
\end{proposition}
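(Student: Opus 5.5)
The plan mirrors the long-curve argument of Proposition~\ref{prop:reduction} and Lemma~\ref{lem:common-reduction}, now for closed curves in the plane with no base point and no fixed ends. Existence is immediate: each crossing-decreasing self-tangency perestroika lowers \(\#V\) by two, so any sequence of such moves terminates, and it terminates exactly when no bounded 2-gon remains. The content of the statement is uniqueness. I would prove it as a diamond (local confluence) property and then apply Newman's lemma; termination is already given by the crossing count.

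First I would establish local confluence. Let \(C\) contain two distinct bounded 2-gons \(D_1,D_2\), and let \(C_1,C_2\) be the results of removing each. I distinguish the cases exactly as in the proof of Lemma~\ref{lem:common-reduction}.
\begin{enumerate}[label=\textup{(\arabic*)}]
\item If \(D_1,D_2\) have disjoint small neighbourhoods, the removals commute, so \(C_1\) and \(C_2\) have a common descendant.
\item If \(D_1=D_2\), there is nothing to prove.
\item If the closures of \(D_1,D_2\) meet, they must share a vertex or an edge. This is the configuration of a chain of two adjacent bigons formed by two arcs crossing three times. In that case removing either bigon yields the same curve up to planar isotopy: a single crossing between the two arcs, with the two consecutive bigons replaced by one crossing pair. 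Equivalently, the standard local movie identifies \(C_1\) and \(C_2\).
\end{enumerate}
For a closed curve every 2-gon other than the unbounded region is bounded, so no case involving an unbounded 2-gon is needed. Since both reductions lower the crossing number by two, Newman's lemma gives a unique normal form. That normal form is \(C^r\), determined up to orientation-preserving planar isotopy.

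To make the argument independent of the path, I would also note the following. Any two full decreasing sequences from \(C\) end at 2-reduced curves, and by confluence these curves have a common descendant. A 2-reduced curve admits no further decreasing moves, so the two endpoints coincide. Orientation is carried along untouched, because each perestroika is supported in a disk and respects the orientation of the arcs.

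I expect the main obstacle to be case (3): ruling out every other way two bigons can overlap. I would check first whether two bigons sharing an edge can also share both vertices, for instance in a closed curve that is just two arcs bounding two bigons, such as a figure with two crossings. Even then, removing either bigon gives a crossing-free circle, so the two results are isotopic, though possibly in different positions in the plane. I would verify in that case that the resulting isotopy classes agree as oriented curves in \(\mathbb R^2\).
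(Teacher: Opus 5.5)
Your route is essentially the paper's. Its proof reruns Lemma~\ref{lem:common-reduction}, whose local cases (disjoint, coincident, sharing one point) are your cases (1)--(3), and Newman's lemma is the abstract form of its reordering of increasing and decreasing moves. Unique normal forms also give invariance of \(C^r\) under all self-tangency perestroikas, since an increasing move \(C\to C'\) is a decreasing move \(C'\to C\); this is how the paper later uses the proposition. What needs repair is the exhaustiveness of case (3). The paper only asserts it too, but the check in your last paragraph reaches a false conclusion.

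The configuration you worry about, two crossings with two bigons sharing both corners, cannot be a single closed curve. All four edges would join the same two crossings, and the planar cyclic orders at the two crossings are mutually reverse. The straight-ahead rule then splits the diagram into two components: two circles meeting twice. There your claim fails: removing the lens gives two disjoint circles, removing a crescent gives nested circles, and these are not planar isotopic. So local confluence genuinely uses that \(C\) is connected, and this is the argument to insert.

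If two distinct bounded bigons share an edge \(e\) (equivalently, have adjacent corners at some crossing), then at each endpoint of \(e\) their other edges \(a,b\) occupy opposite half-edges. The traversal therefore closes up along \(a\cup b\) without ever using \(e\), contradicting that \(C\) is one immersed circle. If they share one crossing at opposite corners, their far corners are distinct, since otherwise you are in the two-component configuration above. That is exactly your three-crossing chain. Take a disk neighbourhood meeting \(C\) only in those two arcs: both removals give the two arcs crossing once there, hence isotopic rel boundary.

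Separately, your claim that the process stops exactly when no bounded 2-gon remains requires reading ``2-gon'' as a face with two distinct corners. Under the component count of Definition~\ref{def:2reduced}, the region between an interior kink and the outer loop of a one-crossing curve is a bounded 2-gon that no decreasing move removes. This caveat applies equally to the paper's statement.
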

\begin{proof}
The proof is the same local cancellation and move-reordering argument as
for the long case (Proposition~\ref{prop:reduction} and
Lemma~\ref{lem:common-reduction}), with no distinguished base point.\footnote{The creation or removal of an unbounded 2-gon is not included in the equivalence relation for closed curves, so the listed local cases are exhaustive.}
\end{proof}

\begin{proposition}\label{prop:u-II}
The unbased polynomial is invariant under self-tangency perestroikas,
and
\[
  \Stq(C)=\Stq(C^r).
\]
\end{proposition}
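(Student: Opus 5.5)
The plan is to reduce the statement to a local computation. Base-point independence (Lemma~\ref{lem:u-basepoint-poly}) does most of the work. The second assertion \(\Stq(C)=\Stq(C^r)\) then follows from the first together with Proposition~\ref{prop:u-unique-reduction}, since \(C^r\) is obtained from \(C\) by finitely many crossing-decreasing self-tangency perestroikas.

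Consider a single self-tangency perestroika supported in a small disk \(D\) that creates or removes a bounded 2-gon. By Lemma~\ref{lem:u-basepoint-poly}, I may compute \(\Stq\) on both sides using any temporary base point. I will place \(b\) on an edge outside \(D\), which is possible because the curve has edges leaving \(D\). Then both curves share the same base edge \(e^b\), and its Alexander index is unchanged because the move alters region degrees only inside \(D\). So the correction term \(2q^{\ind(e^b)}\) is identical on both sides.

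Next I compare the contributions \(\sum_{e\in E(C)}s(e)q^{\ind(e)}\).
\begin{itemize}
\item \textbf{Crossings outside \(D\).} For a crossing \(w\) outside \(D\), the order of the two visits from \(b\) is unchanged, since the move does not alter the cyclic order of passages through crossings outside \(D\). The signs of its incoming edges are therefore unchanged. Their indices are unchanged as well: an incoming edge may be subdivided or shortened inside \(D\), but its final segment terminating at \(w\) still borders the same two regions.
\item \textbf{The two new crossings.} The only new terms come from the two crossings \(v_1,v_2\) of the 2-gon. Both have the same index \(i\), because the four regions around each are the 2-gon, the two side regions, and the region beyond, with degrees \(i\pm1\) and \(i\) twice. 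By the computation in Proposition~\ref{prop:based-II}, they carry opposite intersection numbers relative to any base point off the two strands' shared portion. Their four incoming edges therefore contribute \((q^{i+1/2}-q^{i-1/2})+(-q^{i+1/2}+q^{i-1/2})=0\).
\end{itemize}

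The main obstacle is justifying the opposite-sign claim in the unbased setting: the two strands may be traversed in either relative order from \(b\). For both orientation types of the tangency (coherent or opposite directions), I will check that the first visits to \(v_1\) and \(v_2\) occur along the same strand. With \(b\) outside \(D\), one strand is entered first and it passes through \(v_1\) then \(v_2\) consecutively. At these two crossings the other strand crosses it in opposite directions, so \(\det(T_1,T_2)\) changes sign.

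A secondary point is that edge bookkeeping inside \(D\) is consistent. Edges subdivided by the new crossings are new incoming edges at \(v_1,v_2\), or else they continue to the old terminal crossing with an unchanged last segment, so no other terms change. Combining these steps gives invariance, and iterating along the reduction to \(C^r\) gives \(\Stq(C)=\Stq(C^r)\).
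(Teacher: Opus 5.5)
Your proof is correct and follows essentially the same route as the paper: use Lemma~\ref{lem:u-basepoint-poly} to move the temporary base point off the disk containing the 2-gon, note that the correction term \(2q^{\ind(e^b)}\) is untouched, and cancel the contributions of the four incoming edges at the two new crossings, which have equal index and opposite signs. Your extra verifications are details the paper's short proof only asserts: the first visits to \(v_1,v_2\) both lie on the strand entered first when \(b\notin D\), which forces opposite signs, and edges and crossings outside \(D\) keep their signs and indices.
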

\begin{proof}
By Lemma~\ref{lem:u-basepoint-poly}, move the temporary base point away
from the 2-gon before applying the move. The four incoming edges in \(E(C)\) at the two crossings have equal-index,
opposite signed contributions, which cancel. The base-edge correction is
outside the move and does not change.
\end{proof}

\subsection{Temporary base points, states, and signs}\label{sec:u3}

Temporarily base the oriented curve \(C^r\). At every crossing \(v\), if
\(e_l^v\) is encountered first and \(e_r^v\) second, set
\(s(e_l^v)=-1\) and \(s(e_r^v)=+1\); if the order is reversed, reverse the
signs. Thus
\[
  s(e_l^v)=-s(e_r^v).
\]
Every generator in
\(E_b(C^r)\setminus\{e_0^b,e_1^b\}\) has its sign determined at its terminal
crossing. The two correction generators are exceptions, with
\(s(e_0^b)=s(e_1^b)=1\).

The traversal from the temporary base point \(b\) determines the state
\(S:V(C^r)\to\{0,1\}\) of Definition~\ref{def:state}, with
\((-1)^{S(v)}=\sign(v,b)\). The upper-right local diagrams in
Figure~\ref{fig:long-marked-state} show the state together with the
oriented Seifert circles, including crossing signs, edge signs, and
wide/single-edge data. 

\subsection{Alexander numbering on \(E_b(C^r)\)}
\label{sec:u4}

For an edge in \(E(C^r)\), \(\ind(e)\) is the average of the mapping degrees
of its adjacent regions. Extend it to the correction generators by
\[
  \ind(e_0^b)=\ind(e_1^b)=\ind(e^b),
  \qquad \ind(e_1^v)=\ind(e^b).
\]

The signed sum in the free abelian group on \(E_b(C^r)\) decomposes as
\[
  \sum_{e\in E_b(C^r)}s(e)[e]
  =\sum_j\ \sum_{\substack{e\in E_b(C^r)\\\ind(e)=j}}s(e)[e].
\]

\subsection{Recovery of the polynomial and its moments}\label{sec:u5}

On the generators in \(E_b(C^r)\) for the reduced representative, the map
\([e]\mapsto q^{\ind(e)}\) gives
\[
  \sum_{e\in E_b(C^r)}s(e)[e]\longmapsto
  \sum_{e\in E_b(C^r)}s(e)q^{\ind(e)}
  =\Stq(C^r)=\Stq(C).
\]
The evaluations
\[
 [e]\longmapsto\ind(e),\qquad
 [e]\longmapsto\tfrac12\ind(e)^2
\]
recover \(\rot(C)\) and \(\St^e(C)\), respectively, by
\eqref{eq:u-linear} and \eqref{eq:u-quadratic}. The classical strangeness is
\(\St(C)=\St^e(C)-\tfrac14\).

\subsection{The edge complex of the reduced curve}\label{sec:u7}

Apply the base-edge construction to \(C^r\). Each generator in \(E_b(C^r)\)
has its assigned sign and index. Define
\[
  \mathcal E_b=\Z\langle E_b(C^r)\rangle,\qquad
  \mathcal V_b=\Z\langle V(C^r)\sqcup\{b\}\rangle .
\]
For an edge \(e\) in \(E(C^r)\), let \(v\) be its terminal crossing and write \(e=e^v\).
Set
\[
  \partial[e^v]=[v],\qquad
  \partial[e_0^b]=\partial[e_1^b]=[b],
\]
and extend linearly.
The resulting complex is
\[
  \CE_b(C^r):\qquad
  0\longrightarrow\mathcal E_b
  \xrightarrow{\partial}\mathcal V_b\longrightarrow0,
\]
where \(\mathcal E_b\) lies in homological degree \(1\) and
\(\mathcal V_b\) in degree \(0\).

At a crossing and at the temporary base point, respectively, put
\[
  c_v=[e_l^v]-[e_r^v],\qquad
  c_b=[e_0^b]-[e_1^b].
\]
These are cycles. If \(i=\ind(v)\), then locally
\[
  \ind(e_l^v)=i+\tfrac12,\qquad
  \ind(e_r^v)=i-\tfrac12,
\]
and \(\ind(e_0^b)=\ind(e_1^b)=\ind(e^b)\).

\begin{lemma}[Homology of the edge complex]\label{lem:u-rank}
For the edge complex \(\CE_b(C^r)\), including the crossing-free case,
\begin{equation}\label{eq:u-rank}
  H_0(\CE_b(C^r))=0,\qquad
  H_1(\CE_b(C^r))
    =\Z\langle c_v\ (v\in V(C^r)),\,c_b\rangle
    \cong\Z^{\,|V(C^r)|+1}.
\end{equation}
\end{lemma}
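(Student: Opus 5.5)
The boundary map \(\partial\) is block diagonal with respect to the decompositions
\[
  \mathcal E_b=\bigoplus_{v\in V(C^r)}\Z\langle[e_l^v],[e_r^v]\rangle\;\oplus\;\Z\langle[e_0^b],[e_1^b]\rangle,\qquad
  \mathcal V_b=\bigoplus_{v\in V(C^r)}\Z\langle[v]\rangle\;\oplus\;\Z\langle[b]\rangle,
\]
so I will compute the homology one block at a time. The first thing to justify is that these are genuine direct-sum decompositions, i.e.\ that the generators are partitioned correctly. Every edge of \(C^r\) terminates at exactly one crossing. Every crossing \(v\) has exactly two incoming edges, \(e_l^v\) and \(e_r^v\). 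The base edge \(e_1^v\) is just one of the two incoming edges at its terminal crossing, and it is not duplicated: the extra generators at \(b\) are the formal symbols \(e_0^b,e_1^b\) of Notation~\ref{not:base-edge}, which map only to \([b]\). Hence \(E_b(C^r)=\coprod_v\{e_l^v,e_r^v\}\sqcup\{e_0^b,e_1^b\}\), and \(\partial\) sends the \(v\)-block into \(\Z\langle[v]\rangle\) and the \(b\)-block into \(\Z\langle[b]\rangle\).

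Each block is the same two-term complex \(\Z^2\to\Z\), \((x,y)\mapsto x+y\). This map is surjective, so its cokernel is \(0\). Its kernel is the rank-one free group spanned by \((1,-1)\). In the \(v\)-block this generator is \(c_v=[e_l^v]-[e_r^v]\), and in the \(b\)-block it is \(c_b=[e_0^b]-[e_1^b]\). Summing over the blocks gives
\[
  H_0(\CE_b(C^r))=\operatorname{coker}\partial=0,\qquad
  H_1(\CE_b(C^r))=\ker\partial=\Z\langle c_v\ (v\in V(C^r)),\,c_b\rangle\cong\Z^{|V(C^r)|+1}.
\]
No quotient by boundaries is taken in degree \(1\), because the complex has no degree-\(2\) term.

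The crossing-free case needs a separate check. There \(V(C^r)=\varnothing\) and \(E(C^r)=\varnothing\), while \(e_0^b,e_1^b\) are the two formal generators of Notation~\ref{not:base-edge}. The complex reduces to the single \(b\)-block \(\Z^2\to\Z\), which gives \(H_0=0\) and \(H_1=\Z\langle c_b\rangle\cong\Z^1\), in agreement with \eqref{eq:u-rank}.

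I expect no real obstacle; the only delicate point is the bookkeeping of the base edge. One must confirm that \(e_1^v\) is counted once, as an incoming edge at its terminal crossing, rather than being split into a third generator at \(v\). Once this is checked the blocks are disjoint and the computation is immediate.
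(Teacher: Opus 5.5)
Your proposal is correct and is essentially the paper's argument: \(\partial\) splits over the disjoint generator pairs \(\{e_l^v,e_r^v\}\) and \(\{e_0^b,e_1^b\}\), and each block \(\Z^2\to\Z\) is surjective with kernel spanned by the difference, which gives \(H_0=0\) and the free basis \(\{c_v\}\cup\{c_b\}\) of \(H_1\). Your explicit treatment of the crossing-free case and of the base-edge bookkeeping only spells out what the paper's proof leaves implicit; in that bookkeeping the original \(e^b\) is counted once at its terminal crossing, and \(e_0^b,e_1^b\) are extra generators over \(b\).
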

\begin{proof}
The map \(\partial\) is surjective: every crossing is hit by its
incoming edges and \(b\) is hit by the two correction generators.
Over each crossing its kernel is generated by the difference of the
two incoming generators. Over \(b\) the kernel is generated by
\([e_0^b]-[e_1^b]\). These disjoint pairs of generators give the displayed
free basis of the kernel.
\end{proof}

Define a signed \(q\)-character on the displayed canonical cycle basis
and extend it linearly. If \(i=\ind(v)\), let
\[
  \ch_q(c_v)=s(e_l^v)\bigl(q^{i+1/2}-q^{i-1/2}\bigr)=s(e_l^v)q^{\ind(e_l^v)}+s(e_r^v)q^{\ind(e_r^v)}.
\]
For \(i=\ind(e^b)\), prescribe separately
\[
  \ch_q(c_b)
  =\bigl(s(e_1^b)+s(e_0^b)\bigr)q^i
  =2q^i.
\]
Summing the character over the displayed canonical cycles gives
\begin{equation}\label{eq:u-total-character}
  \sum_{v\in V(C^r)}\ch_q(c_v)+\ch_q(c_b)
  =\sum_{e\in E_b(C^r)}s(e)q^{\ind(e)}
  =\Stq(C^r)=\Stq(C).
\end{equation}
The last equality follows from Proposition~\ref{prop:u-II}.
Define the character polynomial by
\begin{equation}\label{eq:u-edge-P}
  \mathcal P_{\CE_b}(C;t,q)
  :=t\left(\sum_{v\in V(C^r)}\ch_q(c_v)+\ch_q(c_b)\right)
  =t\Stq(C).
\end{equation}
\begin{theorem}[Unbased edge-homology lift]\label{thm:unbased-lift}
The isomorphism type of \(H_1(\CE_b(C^r))\) is independent of the temporary
base point and invariant under self-tangency perestroikas. The sum
\[
  \sum_{v\in V(C^r)}\ch_q(c_v)+\ch_q(c_b)=\Stq(C)
\]
is likewise independent of the temporary base point and invariant under
self-tangency perestroikas. The principal \(t=-1\) specialization of the character polynomial is
\[
 \mathcal P_{\CE_b}(C;-1,q)=-\Stq(C).
\]
\end{theorem}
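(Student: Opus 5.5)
The proof will assemble results already established earlier in the paper, in three steps.

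\textbf{Step 1: the isomorphism type of \(H_1\).} By Lemma~\ref{lem:u-rank}, \(H_1(\CE_b(C^r))\) is free abelian with basis \(\{c_v\}_{v\in V(C^r)}\cup\{c_b\}\). Hence its rank is \(|V(C^r)|+1\). This rank does not involve \(b\) at all: moving the temporary base point only changes which edge is split into \(e_0^b,e_1^b\) and relabels \(e_1^v\). It changes neither \(V(C^r)\) nor the local structure of the complex, namely two generators mapping onto each vertex and onto \(b\). So the isomorphism type is base-point independent.

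\textbf{Step 2: invariance under self-tangency perestroikas.} Suppose \(C\) and \(C'\) are related by self-tangency perestroikas. By Proposition~\ref{prop:u-unique-reduction} (via the common-reduction argument of Lemma~\ref{lem:common-reduction}), \(C^r\) and \(C'^r\) agree up to orientation-preserving planar isotopy. Such an isotopy carries vertices to vertices and incoming edges to incoming edges. It therefore induces a chain isomorphism \(\CE_b(C^r)\cong\CE_{b'}(C'^r)\), with \(b'\) the image of \(b\). Combined with Step~1, this gives the first assertion.

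\textbf{Step 3: the character identity.} Equation~\eqref{eq:u-total-character} gives \(\sum_v\ch_q(c_v)+\ch_q(c_b)=\Stq(C^r)=\Stq(C)\). Base-point independence then follows from Lemma~\ref{lem:u-basepoint-poly}, and invariance from Proposition~\ref{prop:u-II}. The one point needing care is that the character is defined on a basis depending on \(b\), so I will verify the termwise equality directly. Using the local indices \(\ind(e_l^v)=i+\tfrac12\) and \(\ind(e_r^v)=i-\tfrac12\), each \(\ch_q(c_v)\) equals the sum of the contributions of its two incoming edges. Also \(\ch_q(c_b)\) equals the contributions of \(e_0^b,e_1^b\). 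Because \(E_b(C^r)\) is the disjoint union of these pairs, the total is exactly \(\sum_{e\in E_b}s(e)q^{\ind(e)}\). This step is the main obstacle, though only a bookkeeping one. One must check that the retained base edge \(e_1^v\) is counted once, at its terminal crossing, and not also in the correction pair. This is ensured by Notation~\ref{not:base-edge}, where \(e_0^b,e_1^b\) are additional formal generators. The crossing-free case has \(V(C^r)=\varnothing\), and there the identity reduces to \(\ch_q(c_b)=2q^{\ind(e^b)}\).

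\textbf{Conclusion.} Substituting \(t=-1\) into the definition \eqref{eq:u-edge-P} yields \(\mathcal P_{\CE_b}(C;-1,q)=-\Stq(C)\), completing the proof.
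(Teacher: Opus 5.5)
Your proposal is correct and follows essentially the same route as the paper. The paper proves the three parts as follows:
\begin{itemize}
\item For the group, it uses Lemma~\ref{lem:u-rank} together with the uniqueness of \(C^r\) (Proposition~\ref{prop:u-unique-reduction}).
\item For the character sum, it uses equation~\eqref{eq:u-total-character} with Lemma~\ref{lem:u-basepoint-poly} and Proposition~\ref{prop:u-II}.
\item For the \(t=-1\) specialization, it substitutes directly into~\eqref{eq:u-edge-P}.
\end{itemize}
The only cosmetic difference is that you read off base-point independence of \(H_1\) from the rank \(|V(C^r)|+1\) rather than citing the basis bijection of Lemma~\ref{lem:u-basepoint-H}, which amounts to the same thing.
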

\begin{remark}[The specialization at \(t=1\)]\label{rem:unbased-t1}
One can also use \(\mathcal P_{\CE_b}(C;1,q)=\Stq(C)\).
For the original vertex quantization \(\Stqv\) of \cite{Ito2023},
the crossing-sign convention used here gives
\(\Stqv(C)=\sum_v\sign(v,b)q^{\ind(v)}\) with a base point on
an exterior edge. Reversing the convention changes its overall sign
(cf. Remark~\ref{rem:based-signs}). In the present homological
constructions, a minus sign also arises at \(t=-1\), because the
canonical generators have homological degree \(1\).
\end{remark}
\begin{proof}[Proof of Theorem~\ref{thm:unbased-lift}]
By Lemma~\ref{lem:u-rank}, we have
\[
  H_0(\CE_b(C^r))=0,\qquad
  H_1(\CE_b(C^r))
  =\Z\langle c_v\ (v\in V(C^r)),c_b\rangle
  \cong\Z^{|V(C^r)|+1}.
\]
The unbased uniqueness of \(C^r\) (Proposition~\ref{prop:u-unique-reduction})
and Lemma~\ref{lem:u-basepoint-H} establish invariance of the group.
For the character sum, the edge state sum~\eqref{eq:u-total-character},
Lemma~\ref{lem:u-basepoint-poly} (base-point independence), and
Proposition~\ref{prop:u-II}
(invariance under self-tangency perestroikas) give the claim.
The specialization follows from \eqref{eq:u-edge-P}.
\end{proof}

\begin{lemma}[Base-point change for the underlying group]
\label{lem:u-basepoint-H}
Moving \(b\) through a crossing preserves the isomorphism type and total
rank of the underlying group \(H_1\) in \eqref{eq:u-rank}.
\end{lemma}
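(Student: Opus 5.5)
The plan is to compare the two complexes \(\CE_b(C^r)\) and \(\CE_{b'}(C^r)\) directly. Here \(b'\) is obtained from \(b\) by sliding the temporary base point along the curve through a single crossing \(w\). By Lemma~\ref{lem:u-rank}, applied to each base point separately, both first homologies are free abelian of rank \(|V(C^r)|+1\). A free abelian group of finite rank is determined up to isomorphism by its rank, so the isomorphism type and the total rank are both preserved.

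The key point is that the vertex set \(V(C^r)\) does not depend on the base point; only the sign labels and the position of \(b\) change. I would nonetheless write down an explicit isomorphism, so that the statement is visibly compatible with the character bookkeeping. The canonical basis element \(c_v=[e_l^v]-[e_r^v]\) depends only on the local left/right structure at \(v\), not on the traversal order from the base point. The sliding move re-splits the edges adjacent to \(w\): the old base edge regains its terminal crossing \(w\), and the new base edge is split into \(e_0^{b'},e_1^{b'}\). I would send \(c_v\mapsto c_v\) for every \(v\ne w\), send \(c_w\mapsto c_w\) (with the incoming edges at \(w\) relabelled to their new names), and send \(c_b\mapsto c_{b'}\). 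This bijection of free bases gives the isomorphism. Passing through a general sequence of crossings is then handled by induction on the number of crossings traversed.

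The main obstacle is purely bookkeeping: making the identification of edges near \(w\) and near the base edge precise. The generators \(e_0^b,e_1^b\) are formal copies attached to the base edge, and the edge \(e_1^v\) is retained. I would check that after the move each old generator corresponds to exactly one new generator with the same boundary pattern, and that no crossing loses or gains incoming generators; the signs \(s\) may flip, but they play no role in the group. The crossing-free case is immediate: there \(H_1\cong\Z\langle c_b\rangle\) for any \(b\), and no crossing can be passed.
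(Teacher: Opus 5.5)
Your proposal is correct and matches the paper's proof, which is exactly the basis bijection \(c_v\mapsto c_v\), \(c_b\mapsto c_{b'}\) on the canonical cycle bases from Lemma~\ref{lem:u-rank}. Your preliminary observation that both groups are free of rank \(|V(C^r)|+1\) is already sufficient on its own. One small correction simplifies your bookkeeping. Under Notation~\ref{not:base-edge} the base edge is never split and never loses its terminal crossing: it is retained as \(e_1^v\), and \(e_0^b,e_1^b\) are extra formal generators. So \(E(C^r)\) and the boundary pattern do not change when \(b\) moves, and the identification near \(w\) that you anticipate is trivial.
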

\begin{proof}
Map the basis generator labeled by each crossing to the generator with
the same crossing label, and map the old \(c_b\) to the new \(c_b\).
This is a basis bijection and hence an isomorphism of the free groups.
\end{proof}

For distinguishing curves, we compare the underlying ungraded free
abelian groups.

\subsection{Strictness of edge homology}\label{sec:u8}

The invariant compared in this section is the underlying group
\(H_1(\CE_b(C^r))\). Let \(C_1\) and \(C_2\) be the oriented closed
curves obtained by forgetting the base points of the left and right
curves in Figure~\ref{fig:four-crossing-pair}(a), respectively (see also
Arnold~\cite[p.~15, Fig.~12]{Arnold1994}). The comparison is
\begin{align*}
 \#V(C_1)&=\#V(C_2)=4,&\rot(C_1)&=\rot(C_2)=1,\\
 \St(C_1)&=\St(C_2)=0,&\Stq(C_1)&=\Stq(C_2)=2q^{1/2},\\
 H_1(\CE_b(C_1^r))&\not\cong H_1(\CE_b(C_2^r)).
\end{align*}
Take the temporary base point on the indicated index-\(1/2\) exterior
edge. The signed edge sum over \(E(C^r)\) is zero, and
the two base-point generators contribute \(2q^{1/2}\). Hence
\[
 \Stq(C_1)=\Stq(C_2)=2q^{1/2}.
\]
Their reduced crossing counts are four and zero, so \eqref{eq:u-rank} gives
\[
 H_1(\CE_b(C_1^r))\cong\Z^5,\qquad
 H_1(\CE_b(C_2^r))\cong\Z.
\]
For the crossing-free circle, \(E(C_2^r)=\varnothing\) and the two
base-edge generators both have index \(1/2\) and sign \(+1\).
Therefore, directly,
\[
  \Stq(C_2^r)=q^{1/2}+q^{1/2}=2q^{1/2}.
\]

Using \(C_1\) and \(C_2\) from Figure~\ref{fig:four-crossing-pair}(a)
as seeds, form the upper and lower connected-sum sequences in
Figure~\ref{fig:shared-family}(a), forgetting each base point:
\begin{align*}
  \Gamma_0&=C_1,&\Gamma_{m+1}&=\Gamma_m\# C_1,\\
  K_0&=C_2,&K_{m+1}&=K_m\# C_2\qquad(m\ge0).
\end{align*}
The comparison for these closed curves is as follows:
\begin{align*}
  \#V(\Gamma_m)&=\#V(K_m)=4(m+1),&
  \rot(\Gamma_m)&=\rot(K_m)=1,\\
  \St(\Gamma_m)&=\St(K_m)=0,&
  \Stq(\Gamma_m)&=\Stq(K_m)=2q^{1/2},\\
  H_1(\CE_b(\Gamma_m^r))&\not\cong H_1(\CE_b(K_m^r))\qquad(m\ge0).
\end{align*}
For every term, the contributions of the incoming edges in \(E(\Gamma_m^r)\) cancel
within each four-crossing factor, whereas the two base-edge generators
contribute \(2q^{1/2}\). Hence
\[
 \Stq(\Gamma_m)=\Stq(\Gamma_m^r)
 =\underbrace{\sum_{e\in E(\Gamma_m^r)}s(e)q^{\ind(e)}}_{0}+2q^{1/2}
 =2q^{1/2}.
\]
The displayed reduction has \(4(m+1)\) crossings, and therefore
\[
 H_1(\CE_b(\Gamma_m^r))\cong\Z^{4m+5}\qquad(m\geq0).
\]
These ranks are pairwise different. The lower reduced sequence in
Figure~\ref{fig:shared-family}(b) consists of crossing-free oriented
closed curves of rotation number \(1\), all equivalent under
orientation-preserving planar isotopy. Each has \(H_1\cong\Z\).
This proves strictness using edge homology and reduced crossing counts.

Finally,
\[
 2e^{h/2}=2+h+\tfrac14h^2+O(h^3),\qquad
 \rot=1,\quad \St^e=\tfrac14,\quad \St=0.
\]
Here \(\St(C)=\St^e(C)-\tfrac14\) is the classical strangeness.

\section*{Acknowledgments}
This work was supported by JSPS KAKENHI Grant Number JP25K06999.

\paragraph*{AI disclosure.}
ChatGPT (OpenAI) was used to assist with translation of the author's Japanese
notes into English and English-language editing,
expository organization, routine typesetting, and checks of mathematical
notation and internal consistency in preparing this work. It also provided
assistance in locating references and clarifying classical results.
All mathematical statements, proofs, references, and the final text were independently checked
by the author, who takes full responsibility for the work.

\bigskip
Department of Mathematics, Faculty of Engineering, Shinshu University, Wakasato 4-17-1, Nagano, Nagano 380-8553, Japan

{\texttt{nito@shinshu-u.ac.jp}}
\end{document}